\newtheorem{corollary}{Corollary}
\newtheorem{lemma}[corollary]{Lemma}
\newtheorem{definition}[corollary]{Definition}
\newtheorem{proposition}[corollary]{Proposition}
\newtheorem{theorem}[corollary]{Theorem}
\date{}
\begin{document}

\title[A Gaussian version of Littlewood's theorem]{{\small \textbf{A Gaussian version of Littlewood's theorem on random power series }}}
\author{{\small
{\large{G}}UOZHENG {\large{C}}HENG,
%  \normalsize{ School of Mathematics, Wenzhou University, Wenzhou 325035, P. R. China; }\\
{\large{X}}IANG {\large{F}}ANG,
{\large{K}}UNYU {\large{G}}UO
AND {\large{C}}HAO {\large{L}}IU}}
% \normalsize{Department of Mathematics,  National Central University, Chung-Li 32001, Taiwan;} \\

\maketitle
%%%%%% Beginning of self-defined notations !
\def\cc{\mathbb{C}}
\def\zz{\mathbb{Z}}
\def\nn{\mathbb{N}}
\def\rr{\mathbb{R}}
\def\qq{\mathbb{Q}}
\def\dd{\mathbb{D}}
\def\tt{\mathbb{T}}
\def\bb{\mathbb{B}}
\def\ff{\mathbb{F}}

\def\A{\mathcal{A}}
\def\B{\mathcal{B}}
\def\D{\mathcal{D}}
\def\L{\mathcal{L}}
\def\M{\mathcal{M}}
\def\Mperp{\mathcal{M}^\perp}
\def\N{\mathcal{N}}
\def\K{\mathcal{K}}
\def\F{\mathcal{F}}
\def\R{\mathcal{R}}
\def\s{\mathcal{S}}
\def\p{\mathcal{P}}
\def\P{\mathcal{P}}
\def\T{\mathcal{T}}
\def\O{\mathcal{O}}
\def\Z{\mathcal{Z}}
\def\om{\omega}
\def\Om{\Omega}
\def\cod{\text{cod}}
\def\Cov{\text{Cov}}
\def\ker{\text{ker}}
\def\Ker{\text{Ker}}
\def\Var{\text{Var}}
\def\mp{\text{mp}}

\def\al{\alpha}
\def\la{\lambda}
\def\ep{\epsilon}
\def\sig{\sigma}
\def\Sig{\Sigma}
\def\bx{\mathcal{X}}
\def\by{\mathcal{Y}}
\def\bm{\mathcal{M}}
\def\bn{\mathcal{N}}
\def\ba{\mathcal{A}}
\def\be{\mathcal{E}}
\def\dd{\mathbb{D}}
\def\hr{H^2({\mathbb D}^2)}
\def\bigpa#1{\biggl( #1 \biggr)}
\def\bigbracket#1{\biggl[ #1 \biggr]}
\def\bigbrace#1{\biggl\lbrace #1 \biggr\rbrace}

\newenvironment{note}{\begin{tcolorbox}
\noindent

}{\end{tcolorbox}}

\def\papa#1#2{\frac{\partial #1}{\partial #2}}
\def\dbar{\bar{\partial}}

\def\oneover#1{\frac{1}{#1}}

\def\meihua{\bigskip \noindent $\clubsuit \ $}
\def\lingxing{\bigskip \noindent $\blacklozenge\ $}
\def\blue#1{\textcolor[rgb]{0.00,0.00,1.00}{#1}}
\def\red#1{\textcolor[rgb]{1.00,0.00,0.00}{#1}}

\def\norm#1{||#1||}
\def\inner#1#2{\langle #1, \ #2 \rangle}

\def\bigno{\bigskip \noindent}
\def\medno{\medskip \noindent}
\def\smallno{\smallskip \noindent}
\def\bignobf#1{\bigskip \noindent \textbf{#1}}
\def\mednobf#1{\medskip \noindent \textbf{#1}}
\def\smallnobf#1{\smallskip \noindent \textbf{#1}}
\def\nobf#1{\noindent \textbf{#1}}
\def\nobfblue#1{\noindent \textbf{\textcolor[rgb]{0.00,0.00,1.00}{#1}}}

\def\vector#1#2{\begin{pmatrix}  #1  \\  #2 \end{pmatrix}}

\def\devide{\bigskip \hrule
\bigno}

\def\divides{\bigskip \hrule

\bigno}
%
%
%
%\subjclass{2010}{60H25; 47B37}
%\keywords{Random operator, non-selfadjoint operator,
%weighted shift, spectra, random Hardy space, invariant subspace, the Beurling-Halmos-Lax theorem, the von Neumann inequality}

\begin{abstract}
We prove a Littlewood-type theorem on random analytic functions for not necessarily independent Gaussian processes.
We show that if we randomize a function in the Hardy space $H^2(\dd)$ by a Gaussian process whose  covariance matrix $K$ induces a bounded operator on $l^2$, then the resulting random function is almost surely in $H^p(\dd)$ for any $p>0$.
The case  $K=\text{Id}$, the identity operator, recovers Littlewood's theorem.
A new ingredient in our proof is to recast the membership problem as the boundedness of an operator. This reformulation enables us to use tools in functional analysis and is applicable to other situations. The sharpness of the new condition and several ramifications are discussed.
\end{abstract}
\footnote{

\noindent 
 \textbf{Keywords:} Random analytic function, Hardy space,  Gaussian process, covariance matrix.

\noindent 
\textbf{Mathematics Subject Classification [MSC]:} 30B20; 47B38.}

%30B20 Random power series
%60G15  Gaussian processes
%47B38 Operators on function spaces (general)
%60H25 Random operators and equations
%60B05 Probability measures on topological spaces

%\tableofcontents

%\newpage
\section{Introduction and main results}
\noindent
A folklore about the summation of a series is that a randomized series often enjoys improved regularity. A well known example is that $\sum_{n=1}^\infty \pm \frac{1}{n^p}$ is almost surely convergent if and only if $p > \frac{1}{2}.$
In the setting of random analytic functions, one of the best known results is  Littlewood's theorem. Let $f(z)=a_0+a_1z+a_2z^2+\cdots \in H^2(\dd)$ be an element of the Hardy space over the unit disk. Let $\{\epsilon_n\}_{n=0}^\infty$ be a sequence of independent, identically distributed Rademacher random variables, that is, $P(\epsilon_n=1)=P(\epsilon_n=-1)=\frac{1}{2}$ for all $n\geq0.$  Littlewood's theorem, proved in 1930 \cite{Littlewood},  states that
$$(\mathcal{R}f)(z)\doteq\sum_{n=0}^\infty a_n \epsilon_n z^n \in H^p(\dd)$$
almost surely for all $p \ge 2.$
The same is true for a standard Steinhaus sequence  \cite{Littlewood1,Paley1} and for a standard Gaussian sequence (\cite{RandomSeries}, p. 54; \cite{PWZ1933}).  Littlewood's theorem can be restated as that $\mathcal{R}f$ represents  $H^p(\dd)$ functions almost surely if and only if $f \in H^2(\dd)$.

\bigno When the random  series
$
\sum_{n=0}^{\infty}a_n\epsilon_nz^n
$
represents a function in $H^\infty(\dd)$ almost surely is much harder. Necessary conditions and sufficient conditions were given by
Paley and Zygmund \cite{Paley1} and later by Salem and Zygmund \cite{Salem}. In 1963,   Billard \cite{Billard63} proved that the problem is equivalent to the case of  Steinhaus sequences. A remarkable characterization was finally obtained by  Marcus and Pisier in 1978 \cite{Pisier78} (see also \cite{RandomSeries,Pisier1}). The proof relies on the celebrated Dudley-Fernique theorem.
%
%\bigno
%Despite considerable activities in \red{1970s ???} \red{Cite ``S. Helgason, Topologies of group algebras and a theorem of Littlewood" and ``1966 Alessandro, A theorem of Littlewood and lacunary series for compact groups"}

 \bigno In 1974
 Anderson, Clunie and Pommerenke \cite{JJC} studied the case of the Bloch space $\mathcal{B}$.
A necessary and sufficient condition for a random Taylor series to represent a Bloch function is given by Gao \cite{Gao} in 2000.
 In 1981, Sledd \cite{Sledd} showed that the condition $
\sum_{n=1}^{\infty}|a_n|^2\log n<\infty
$  implies that
$
\sum_{n=0}^{\infty}\epsilon_na_nz^n\in\text{BMOA a.s.}
$
Then  Stegenga \cite{Stegenga} showed that there is a sequence $\{a_n\}_{n=0}^{\infty}\in\ell^2$ but
$
\sum_{n=0}^{\infty}\epsilon_na_nz^n\notin\text{BMOA a.s.}
$
  In \cite{PDuren1}, Duren explored the difference between BMOA and $\bigcap_{0<p<\infty}H^p$ for the random Taylor series. For convenience, we  write $H^p$ for $H^p(\dd)$. More recent results on random BMOA functions are due to Nishry-Paquette \cite{Nishry2020}.
 Cochran, Shapiro and Ullrich proved in 1993 that a  Dirichlet function with random signs is a.s. a Dirichlet multiplier \cite{WJD}. Along a somehow different direction, random Dirichlet series was investigated, say, by Konyagin, Queff\'{e}lec, Saksman and Seip \cite{KQSS} for a recent reference.

\bigno
Our interest in this subject is largely due to \cite{CFZ} where a random version of the  Hardy space $H^2$ is introduced. Then \cite{ChengFangLiu} studies Littlewood-type phenomena for random Bergman functions, which, however, exhibit no improvement of regularity for any $p>0$. In \cite{ChengFangLiu}, it is shown that
if $p\ge 2$ and $f \in L^p_a(\dd)$, then $\R f \in L^q_a(\dd)$ almost sure if and only if $q\le p$.  Moreover,  if $0<p<2$, then $\R f \in L^q_a(\dd)$ almost surely for all $f
\in L^p_a(\dd)$ if and only if $\frac{1}{q} >\frac{2}{p}-\frac{1}{2}.$ This sharp contrast with the Hardy spaces prompts us to  take a deeper look at the original Littlewood theorem.

\bigno A close examination of known proofs of either Littlewood's  arguments \cite{Littlewood} or later improvement by Paley and Zygmund \cite{Paley1}, or the polished proofs in the monographs of Duren \cite{Duren} and Kahane \cite{RandomSeries}, reveals that the independence among the randomizing sequence $\{\epsilon_n\}_{n=0}^{\infty}$ renders one to sum up the contribution of each monomial separately. In other words, after independent randomization, the contribution of different $z^n$'s to the $H^p$-norm is simply added up via a triangle inequality. This is indeed a common pattern in the existing literature. It is perhaps unsatisfactory  and suggests room for improvement to non-independent randomization.

\bigno
In this note we try to relate the $H^p$-structure to  the correlation of a random process.
Although this   appears to be a new  effort from the viewpoint of random power series, it is natural from the viewpoint of Gaussian analytic functions (GAF),  a beautiful subject lying at the intersection of complex analysis and probability. GAF receives considerably attention in recent years but it progresses along different directions which we shall briefly review later; in particular, Littlewood-type phenomena remain largely overlooked.

\bigno  By a Gaussian analytic function (GAF) over the unit disk one   means a measurable map
$F: (\Omega, \mathcal{F}, \mathbb{P}) \to \text{Hol}(\dd)$ from a probability space to the space of
analytic functions on the unit disk such that $F(\cdot)(z)$ is a Gaussian variable for each $z \in \dd$. Here $\text{Hol}(\dd)$ is a Fr\'{e}chet space with the topology of uniform convergence
on compact subsets of $\dd$. The latter topology is   metrizable and the measurable structure on $\text{Hol}(\dd)$
is  naturally   the $\sigma$-field generated by the Borel topology. By a slight abuse of notations we write
$F_\omega(z)\doteq (F(\omega))(z)$ which is jointly measurable in $\omega$ and $z$. One may take the formal power series expansion
$F_\omega (z) =\sum_{n=0}^{\infty} X_n z^n$. Then it is easily shown that $X_n$ is measurable and $\{X_n\}_{n=0}^\infty$ form a Gaussian
process, clearly not necessarily of independent entries.

\begin{definition} A sequence of random variables $\{X_n\}_{n=0}^{\infty}$ has the \textbf{$L_p$-property} if for any
$
f(z)=\sum_{n=0}^{\infty}a_nz^n\in H^2,
$
one has
\begin{equation}
\big(\mathcal{R}f\big)(z)\doteq\sum_{n=0}^{\infty}a_nX_nz^n\in H^p \ \ \text{a.s.}
\end{equation}
 Moreover, $\{X_n\}_{n=0}^{\infty}$ has the \textbf{L-property} if  it has $L_p$ for all $p >0$.
\end{definition}

\noindent
In this note only real Gaussian processes are concerned.
Recall that the covariance matrix of a Gaussian process $\{X_n\}_{n=0}^{\infty}$ is
\begin{equation}
K=\Big(\mathbb{E}\big((X_m-\mathbb{E}X_m)(X_n-\mathbb{E}X_n)\big)\Big)_{n, m\geq0}.
\end{equation}
A Gaussian process $\{X_n\}_{n=0}^{\infty}$ is called centered if $\mathbb{E}X_n=0, \ n\geq0.$ It is a basic fact that the law of a centered Gaussian process is  determined by its covariance matrix \cite{Hida}. The main result of this note is

\begin{theorem}\label{T:main}
If the covariance matrix $K$ of  a centered Gaussian process $\{X_n\}_{n=0}^{\infty}$ is bounded on $\ell^2,$ then
$\{X_n\}_{n=0}^{\infty}$ has the $L$-property.
\end{theorem}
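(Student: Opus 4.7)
The plan is to derive the $L$-property by establishing the single moment bound $\mathbb{E}\|\mathcal{R}f\|_{H^p}^p\lesssim_p \|K\|^{p/2}\|f\|_{H^2}^p$ for every $p>0$. Fix $f(z)=\sum_na_nz^n\in H^2$ and set $F(z)=(\mathcal{R}f)(z)=\sum_n a_nX_nz^n$. For each $z\in\dd$ the coefficient vector $\phi(z):=(a_nz^n)_{n\geq0}$ lies in $\ell^2$ with $\|\phi(z)\|_{\ell^2}^2\leq\|f\|_{H^2}^2$, and $F(z)=\langle X,\phi(z)\rangle$ is, as the $L^2$-limit of linear combinations of the $X_n$'s, a (possibly complex) Gaussian whose variance is the quadratic form of $K$ applied to $\phi(z)$:
\[
\mathbb{E}|F(z)|^2=\langle K\phi(z),\phi(z)\rangle\leq \|K\|_{\ell^2\to\ell^2}\sum_n|a_n|^2|z|^{2n}.
\]
This is the point at which the hypothesis ``$K$ bounded on $\ell^2$'' enters; the same computation applied to tails shows that the partial sums of $F$ converge in $L^2(\Omega;H^2)$, so $F$ is a well-defined random element of $H^2\subset\text{Hol}(\dd)$.

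Next I would invoke the standard equivalence of $L^p$-moments for Gaussian random variables: for every $p>0$ there is a constant $C_p$ with $\mathbb{E}|Y|^p\leq C_p(\mathbb{E}|Y|^2)^{p/2}$ for every scalar centered Gaussian $Y$ (for complex $Y$, viewed as a planar real Gaussian with possibly correlated real and imaginary parts, the same inequality holds with a different constant). Combining with the variance estimate gives, uniformly in $r\in[0,1)$ and $\theta\in[0,2\pi]$,
\[
\mathbb{E}|F(re^{i\theta})|^p\leq C_p\|K\|^{p/2}\Bigl(\sum_n|a_n|^2r^{2n}\Bigr)^{p/2}\leq C_p\|K\|^{p/2}\|f\|_{H^2}^p.
\]
Since $F_\omega$ is analytic, $|F_\omega|^p$ is subharmonic for every $p>0$, so the circle integrals $\int_0^{2\pi}|F_\omega(re^{i\theta})|^p\,d\theta$ are non-decreasing in $r$; Fubini and monotone convergence then yield $\mathbb{E}\|F\|_{H^p}^p<\infty$, whence $F\in H^p$ almost surely.

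This is the operator-theoretic reformulation highlighted in the abstract: $\mathcal{R}$ is realized as a bounded linear map $H^2\to L^p(\Omega;H^p(\dd))$ of norm at most $C_p^{1/p}\|K\|_{\ell^2\to\ell^2}^{1/2}$, so the dependence among the $X_n$'s encoded by $K$ is absorbed into a single quadratic-form inequality rather than handled monomial by monomial as in Littlewood's original argument. The essential content therefore lies in the variance bound $\langle K\phi(z),\phi(z)\rangle\leq\|K\|\,\|\phi(z)\|^2$; moment comparison for Gaussians, subharmonicity of $|F|^p$, and $L^2$ convergence of Gaussian series are all standard, so I anticipate no deep obstacle, only careful bookkeeping in the Fubini/monotone-convergence step that must work uniformly for all $p>0$.
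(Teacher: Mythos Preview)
Your proposal is correct and follows essentially the same route as the paper: both arguments bound the pointwise variance $\mathbb{E}|F(re^{i\theta})|^2$ by $\|K\|\sum_n|a_n|^2$ via the boundedness of $K$ (the paper phrases this as a Bessel-sequence bound, Proposition~\ref{Bessel}), invoke the equivalence of Gaussian moments at each fixed $\theta$, and then apply Fubini to conclude $\mathbb{E}\|\mathcal{R}f\|_{H^p}^p<\infty$. The paper works with polynomials plus a density argument and targets $L^2(\Omega,H^p)$ while you use monotone convergence in $r$ and target $L^p(\Omega,H^p)$, but these are cosmetic variations on the same core idea.
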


\noindent
For example, if $K=\big(\frac{1}{i+j+1}\big)_{i,j\geq0},$ the Hilbert matrix, then $\{X_n\}_{n=0}^{\infty}$ has the $L$-property. Several other examples are given in Section \ref{S:E}. By well known results (\cite{RandomSeries}, pp. 22,  54 and  179), the case $K=\text{Id}$, the identity operator, recovers the original Littlewood theorem.

\bigno
The sharpness of Theorem  \ref{T:main} is discussed in Subsection \ref{SS:sharp}. To complement Theorem \ref{T:main}, we offer the following three results before we end this introduction.
The first one is the reduction to the mean zero case.

\begin{lemma}\label{reductionzero}
Let  $\{X_n\}_{n=0}^{\infty}$ be a Gaussian process, not necessarily independent.
 Then $\{X_n\}_{n=0}^{\infty}$ has the $L_p$-property $(p>0)$ if and only if
 $\{X_n-\mathbb{E}X_n\}_{n=0}^{\infty}$ has the $L_p$-property and
$
\{\mathbb{E}X_n\}_{n=0}^{\infty} $  is a coefficient multiplier from $H^2$ to $H^p$.
\end{lemma}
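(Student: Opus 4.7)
The plan is to separate $\mathcal{R}f$ into random and deterministic parts. Writing $m_n := \mathbb{E}X_n$ and $Y_n := X_n - m_n$, for $f(z)=\sum_{n\ge 0} a_n z^n \in H^2$ we have
$$\mathcal{R}f(z) \;=\; \sum_{n=0}^\infty a_n Y_n z^n \;+\; \sum_{n=0}^\infty a_n m_n z^n.$$
The first summand is a.s.\ in $H^p$ provided $\{Y_n\}$ has the $L_p$-property, and the second is in $H^p$ provided $\{m_n\}$ is a coefficient multiplier from $H^2$ into $H^p$. This immediately yields the ``if'' direction.

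For the converse, the task is to extract \emph{both} properties from the single hypothesis that $\{X_n\}$ has $L_p$. I would use a symmetrization argument: take $\{X'_n\}$ to be an independent copy of $\{X_n\}$ on a product probability space. Fubini and the hypothesis give that almost surely both $\sum a_n X_n z^n$ and $\sum a_n X'_n z^n$ belong to $H^p$, hence so does
$$\sum_{n=0}^\infty a_n (X_n - X'_n)\, z^n \;=\; \sum_{n=0}^\infty a_n (Y_n - Y'_n)\, z^n.$$
The centered Gaussian process $\{Y_n - Y'_n\}$ has covariance $2K$, the same as the process $\{\sqrt{2}\,Y_n\}$, so by the fact that a centered Gaussian law is determined by its covariance, the two have identical distributions on $\mathbb{R}^{\mathbb{N}}$. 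Since the event $\{(b_n) : \sum a_n b_n z^n \in H^p\}$ is Borel-measurable in $(b_n)\in\mathbb{R}^{\mathbb{N}}$ (e.g., via $\sup_{r<1}M_p(r,\cdot)<\infty$, itself a countable supremum of continuous functionals of finitely many coefficients), we conclude that $\sum a_n Y_n z^n \in H^p$ a.s., that is, $\{Y_n\}$ has the $L_p$-property.

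Knowing $\{Y_n\}$ has $L_p$, rewrite the deterministic tail as
$$\sum_{n=0}^\infty a_n m_n z^n \;=\; \sum_{n=0}^\infty a_n X_n z^n \;-\; \sum_{n=0}^\infty a_n Y_n z^n.$$
The right-hand side is a.s.\ in $H^p$ (the first summand by hypothesis on $\{X_n\}$, the second by the step just finished), so the deterministic left-hand side must lie in $H^p$. Since $f \in H^2$ was arbitrary, $\{m_n\}$ is a coefficient multiplier from $H^2$ to $H^p$, closing the argument. The only mildly subtle step is the symmetrization and the accompanying measurability check; beyond that, the proof is a straightforward decomposition.
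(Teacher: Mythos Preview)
Your proof is correct. Both your argument and the paper's hinge on the same underlying idea---Gaussian symmetry---but you implement it differently. The paper observes directly that the centered process $\{Y_n\}$ satisfies $\{Y_n\}\stackrel{d}{=}\{-Y_n\}$, hence $\{2m_n-X_n\}=\{-Y_n+m_n\}$ has the same law as $\{X_n\}$; adding the two random series immediately gives $2\sum a_n m_n z^n\in H^p$, and the $L_p$-property of $\{Y_n\}$ then follows by subtraction. You instead pass to an independent copy $\{X_n'\}$, identify $\{X_n-X_n'\}$ with $\{\sqrt{2}\,Y_n\}$ via the covariance, deduce the $L_p$-property of $\{Y_n\}$ first, and then recover the multiplier statement. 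The paper's route is shorter---no product space, no covariance matching, no explicit measurability check---while your symmetrization-by-independent-copy is the more general-purpose tool (it would work for symmetric non-Gaussian sequences as well). Either way the substance is the same.
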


 \noindent \emph{Remark.}
The coefficient multiplier space $(H^2, H^p)$ is known $(H^2, H^p)=\ell^\infty$ if $0<p\leq2$ and $(H^2, H^\infty)=H^2$ (\cite{2016Taylor}, pp. 215, 261). The characterization of $(H^2, H^p)$ for $2<p<\infty$ is still an open problem  (\cite{2016Taylor}, p. 276).

\bigno
The next result is analogous to  Paley and Zygmund's exponential improvement  \cite{Paley1} of Littlewood's theorem.
 \begin{proposition}\label{exponentintegral}
Assume that the covariance matrix $K$ of a centered Gaussian process $\{X_n\}_{n=0}^{\infty}$ is bounded on $\ell^2.$ Then for each $\{a_n\}_{n=0}^{\infty}\in\ell^2,$ $\sum_{n=0}^{\infty}a_nX_ne^{in\theta}$ converges a.s. almost everywhere, and for any $\la>0,$
$$\int_{0}^{2\pi}\exp\Big(\la\Big|\sum_{n=0}^{\infty}a_nX_ne^{in\theta}\Big|^2\Big)d\theta<\infty \ \ \text{a.s.}$$
\end{proposition}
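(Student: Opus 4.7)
The plan is to treat convergence and exponential integrability separately, with the latter resting on a $\lambda$-dependent truncation. For the almost sure, almost everywhere convergence, I would rely on a second-moment computation: letting $S_N(\theta)=\sum_{n=0}^N a_n X_n e^{in\theta}$, orthogonality of $\{e^{in\theta}\}$ combined with $\sup_n K_{nn}\leq\|K\|$ gives
$$
\mathbb{E}\int_0^{2\pi}|S_N(\theta)-S_M(\theta)|^2\,d\theta \;=\; 2\pi\sum_{M<n\leq N}K_{nn}|a_n|^2 \;\leq\; 2\pi\|K\|\sum_{M<n\leq N}|a_n|^2 \;\longrightarrow\; 0,
$$
so $(S_N)$ is Cauchy in $L^2(\Omega\times\mathbb{T})$, with limit $F(\theta)=\sum a_nX_ne^{in\theta}$. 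Combined with Theorem~\ref{T:main} at $p=2$ (which gives $\mathcal{R}f\in H^2$ almost surely, with a.e.\ radial limits equal to $F$) and Carleson's theorem on a.e.\ convergence of $L^2$-Fourier series, the partial sums converge almost surely, almost everywhere.

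For the exponential integrability, fix $\lambda>0$ and split $F=F_{\leq N}+F_{>N}$ with $F_{\leq N}(\theta)=\sum_{n=0}^N a_nX_ne^{in\theta}$, where $N=N(\lambda)$ will be chosen large. The polynomial part satisfies $C_N(\omega):=\sup_\theta|F_{\leq N}(\theta)|\leq\sum_{n\leq N}|a_n||X_n(\omega)|<\infty$ almost surely, whence
$$
\int_0^{2\pi}e^{\lambda|F|^2}\,d\theta\;\leq\;e^{2\lambda C_N(\omega)^2}\int_0^{2\pi}e^{2\lambda|F_{>N}|^2}\,d\theta.
$$
For the tail I would decompose $F_{>N}(\theta)=U(\theta)+iV(\theta)$ into its real and imaginary parts; the pair $(U(\theta),V(\theta))$ is jointly centered Gaussian with covariance matrix $\Sigma(\theta)$ whose trace equals $\mathbb{E}|F_{>N}(\theta)|^2=\langle Kv_\theta,v_\theta\rangle_{\ell^2}$ with $v_\theta=(a_ne^{in\theta}\mathbf{1}_{n>N})_n$, and is therefore bounded uniformly in $\theta$ by $\|K\|\cdot\sum_{n>N}|a_n|^2$. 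Choosing $N$ so that $4\lambda\|K\|\sum_{n>N}|a_n|^2<1$ and invoking the Gaussian integral formula $\mathbb{E}e^{2\lambda(U^2+V^2)}=\det(I-4\lambda\Sigma(\theta))^{-1/2}$, which stays bounded uniformly in $\theta$ since the eigenvalues of $\Sigma(\theta)$ are dominated by its trace, Fubini yields $\mathbb{E}\int_0^{2\pi}e^{2\lambda|F_{>N}|^2}d\theta<\infty$, hence the tail integral is almost surely finite. Together with a.s.\ finiteness of $C_N(\omega)$, this completes the proof.

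The principal obstacle is that no uniform-in-$\lambda$ expectation calculation can cover all $\lambda>0$, because the moment generating function of $Y^2$ for a Gaussian $Y$ blows up beyond a variance-dependent threshold. The truncation sidesteps this by splitting the burden between two blocks: the polynomial block contributes a $\lambda$-independent, almost surely finite $L^\infty$-norm, while the tail block has variance of order $\|(a_n)_{n>N}\|_{\ell^2}^2$, which can be driven below $(4\lambda\|K\|)^{-1}$ by enlarging $N$. The almost sure set in the conclusion consequently depends on $\lambda$ through the chosen $N(\lambda)$, consistent with the order of quantifiers in the statement.
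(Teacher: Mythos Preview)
Your proposal is correct and follows essentially the same route as the paper. The paper also proves the exponential integrability by first bounding $\mathbb{E}\exp(\lambda|\sum a_nX_n\cos(n\theta+\psi_n)|^2)$ uniformly in $\theta$ for small $\lambda$ (via the Bessel-sequence bound $\mathbb{E}|\sum c_nX_n|^2\le\|K\|\,\|c\|_{\ell^2}^2$), applies Fubini, and then passes to arbitrary $\lambda$ by ``a trick used in (\cite{RandomSeries}, p.~52)'' --- which is precisely your head/tail truncation $F=F_{\le N}+F_{>N}$ with $N$ chosen so the tail variance beats $\lambda$.

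Two minor technical differences are worth noting. First, for the tail estimate the paper expands $\mathbb{E}e^{\lambda Y^2}$ as a moment series $\sum \lambda^n(2n-1)!!\,(\mathbb{E}Y^2)^n/n!$, whereas you use the closed-form determinant formula $\det(I-4\lambda\Sigma)^{-1/2}$; both encode the same Gaussian computation and the same threshold on $\lambda$. Second, for the a.s.\ a.e.\ convergence the paper argues more elementarily: for each fixed $\theta$ the partial sums are $L^2(\Omega)$-Cauchy (again by boundedness of $K$), hence a.s.\ convergent since they are Gaussian, and Fubini flips the quantifiers. Your appeal to Carleson's theorem is correct but considerably heavier than necessary; the paper's argument avoids it entirely.
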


\noindent Lastly, the independent case admits the following (slight) generalization.

\begin{proposition}\label{independentLp} Let  $\{X_n\}_{n=0}^{\infty}$ be a sequence of independent Gaussian variables with
$$
\mathbb{E}X_n=\mu_n, \ \ \text{Var}\big(X_n\big)=\sigma_n^2, \ \ n\geq0.
$$
 Then $\{X_n\}_{n=0}^{\infty}$ has the $L_p$-property for $p>0$  if and only if
\begin{itemize}
\item[(1)] $\{\sigma_n\}_{n=0}^{\infty} \in\ell^\infty$, and
\item[(2)] $\{\mu_n\}_{n=0}^{\infty} $ is a coefficient multiplier from $H^2$ to $H^p$.
\end{itemize}
\end{proposition}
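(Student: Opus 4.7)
The plan is to combine Lemma \ref{reductionzero} with the classical converse of Littlewood's theorem for independent Gaussians.

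First, by Lemma \ref{reductionzero}, the sequence $\{X_n\}$ has the $L_p$-property if and only if the centered sequence $\{X_n-\mu_n\}$ does and $\{\mu_n\}\in(H^2,H^p)$. This immediately accounts for condition (2), reducing the problem to showing that a centered independent Gaussian sequence, which may be written as $\{\sigma_nY_n\}$ with $Y_n$ iid standard Gaussian, has the $L_p$-property if and only if $\{\sigma_n\}\in\ell^\infty$.

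For sufficiency, I would observe that the covariance matrix of $\{\sigma_nY_n\}$ is the diagonal operator $\mathrm{diag}(\sigma_n^2)$, which is bounded on $\ell^2$ precisely when $\{\sigma_n\}\in\ell^\infty$. Hence Theorem \ref{T:main} applies and yields the full $L$-property, in particular the $L_p$-property for every $p>0$.

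For necessity, assume $\{\sigma_n\}\notin\ell^\infty$. I would extract a subsequence $n_k$ with $\sigma_{n_k}\ge k$, and set $a_{n_k}=1/k$ with $a_n=0$ for $n\notin\{n_k\}$; then $\{a_n\}\in\ell^2$ while $\{a_n\sigma_n\}\notin\ell^2$. The randomized series $(\mathcal{R}f)(z)=\sum_k(a_{n_k}\sigma_{n_k})Y_{n_k}z^{n_k}$ is then a power series in iid standard Gaussians whose deterministic coefficients fail to be square summable. The converse half of Littlewood's theorem for iid Gaussians (see \cite{RandomSeries}, Chapter 5) asserts that for iid standard Gaussians $Y_n$ and any $p>0$, $\sum_nc_nY_nz^n\in H^p$ almost surely if and only if $\{c_n\}\in\ell^2$. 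Applying this with $c_n=a_n\sigma_n$ yields $\mathcal{R}f\notin H^p$ almost surely, so $\{X_n-\mu_n\}$ fails the $L_p$-property.

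The main obstacle is the necessity direction: the sufficiency is a direct corollary of Theorem \ref{T:main}, but the converse relies on the nontrivial fact that an iid Gaussian series with non-$\ell^2$ coefficients is almost surely outside every $H^p$. A self-contained justification would combine the Kolmogorov zero-one law (to promote a positive-probability failure to an almost sure one) with a Paley--Zygmund type lower bound showing that divergence of $\sum|c_n|^2$ forces the boundary function to fail membership in $H^p$ for any $p>0$.
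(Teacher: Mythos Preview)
Your proposal is correct and follows essentially the same route as the paper. Two small organizational differences: the paper does the symmetry argument for the mean inline rather than invoking Lemma~\ref{reductionzero} (which it proves afterward), and for sufficiency in the centered case it appeals directly to the classical Littlewood theorem via $\{a_n\sigma_n\}\in\ell^2$ rather than to Theorem~\ref{T:main}, thereby avoiding the use of the paper's main result to establish what is presented as a preliminary proposition. The necessity argument is identical in substance to yours.
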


\noindent Another generalization of the $L_p$-property (to compact groups) is given by Helgason \cite{Helgason57}, Fig\`{a}-Talamanca and Rider \cite{Alessandro66}. A related line of research, loosely under the umbrella of Gaussian analytic functions (GAF) is in
 Peres-Vir\'{a}g \cite{Peres05}, Sodin \cite{Sodin10,Sodin00,Sodin05}, or
 the monograph by Hough-Krishnapur-Peres-Vir\'{a}g \cite{Peres09}.
For general properties about the regularity of Gaussian processes, readers can refer to  Talagrand \cite{Talagrand14}
 or Ledoux-Talagrand \cite{91LedouxTalagrand}.
For more properties about $H^p,$ see \cite{Duren,Garnett,Hoffman62,Koosis}.
For more about random Taylor series, see \cite{Kahane97,Kahane997}.

\section{Preliminary issues: Convergence radius, the independent case and reduction to mean zero}

\noindent Let  $\{X_n\}_{n=0}^\infty$ be  a centered Gaussian process with $
  \text{Var}(X_n)=\sigma_n^2$ $(n \ge 0)$.  We first observe that for  $\{a_n\}_{n=0}^{\infty}\in\ell^2,$
  the convergence radius of
  $
  (\mathcal{R}f)(z)=\sum_{n=0}^{\infty}a_nX_nz^n
  $
   is at least one almost surely if and only if $ \limsup\limits_{n\to\infty}|\sigma_n|^{1/n}\leq1$.  This is a consequence of  the following fact whose  proof is a standard application of the Borel-Cantelli lemma, hence skipped. Note that $\{X_n\}$'s are not necessarily independent.

\begin{lemma}\label{radiusGaussian}
Let $\{X_n\}_{n=0}^{\infty}$ be a sequence of standard Gaussian variables, i.e.,  $X_n \sim N(0, 1)$ for any $n=0, 1, 2, \cdots$. Then
$
\lim_{n\to\infty}|X_n|^{1/n}=1 \ \text{a.s.}
$
\end{lemma}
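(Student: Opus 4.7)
The plan is to prove $\lim_{n \to \infty} |X_n|^{1/n} = 1$ a.s.\ by separately verifying $\limsup_n |X_n|^{1/n} \leq 1$ a.s.\ and $\liminf_n |X_n|^{1/n} \geq 1$ a.s., with each direction being a one-line application of the first Borel--Cantelli lemma. The first Borel--Cantelli lemma requires only summability of marginal probabilities, not independence, which is exactly why the conclusion holds for an arbitrary (possibly correlated) sequence of standard Gaussians. After establishing each inequality for a fixed $\epsilon > 0$, I would intersect over $\epsilon \in \{1/k : k \in \nn\}$ to produce a single almost sure event.

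For the upper bound, I would fix $\epsilon > 0$ and use the standard Gaussian tail estimate $\mathbb{P}(|X_n| > t) \leq \sqrt{2/\pi}\, e^{-t^2/2}$ for $t > 0$. Plugging $t = (1+\epsilon)^n$ gives $\mathbb{P}(|X_n| > (1+\epsilon)^n) \leq \sqrt{2/\pi}\, \exp\!\bigl(-\tfrac{1}{2}(1+\epsilon)^{2n}\bigr)$, which is summable (indeed doubly exponentially small). The first Borel--Cantelli lemma then yields $|X_n| \leq (1+\epsilon)^n$ for all sufficiently large $n$, almost surely, hence $\limsup_n |X_n|^{1/n} \leq 1 + \epsilon$ a.s.

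For the lower bound, I would fix $\epsilon \in (0,1)$ and use the elementary density bound $\mathbb{P}(|X_n| < t) \leq \sqrt{2/\pi}\, t$ for all $t > 0$, valid because the $N(0,1)$ density is bounded by $1/\sqrt{2\pi}$ and we integrate over an interval of length $2t$. Taking $t = (1-\epsilon)^n$ gives $\mathbb{P}(|X_n| < (1-\epsilon)^n) \leq \sqrt{2/\pi}\, (1-\epsilon)^n$, which is again summable. The first Borel--Cantelli lemma gives $|X_n| \geq (1-\epsilon)^n$ eventually, so $\liminf_n |X_n|^{1/n} \geq 1 - \epsilon$ a.s.

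There is essentially no obstacle here; the only thing to watch is to invoke Borel--Cantelli rather than its converse, so that independence among the $X_n$'s is never used. Once both inequalities are established on a full measure set simultaneously (by the countable intersection over $\epsilon = 1/k$), the conclusion $\lim_n |X_n|^{1/n} = 1$ a.s.\ follows.
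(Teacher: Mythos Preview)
Your proof is correct and is precisely the standard Borel--Cantelli argument the paper has in mind; the paper in fact omits the proof entirely, saying only that it is ``a standard application of the Borel--Cantelli lemma.'' One minor quibble: the tail bound $\mathbb{P}(|X_n|>t)\le\sqrt{2/\pi}\,e^{-t^2/2}$ as stated fails for small $t$ (let $t\to 0^+$), but since you apply it only with $t=(1+\epsilon)^n\ge 1$, where it does hold, this is harmless.
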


\bigno
We first prove Proposition \ref{independentLp}. Recall that
\begin{equation}
  \big(H^2, E\big)=\Big\{\{\la_n\}_{n=0}^{\infty}: \ \sum_{n=0}^{\infty}a_n\la_nz^n\in E,\ \text{for every $f(z)=\sum_{n=0}^{\infty}a_nz^n\in H^2$} \Big\},
  \end{equation}
  where $E$ is an analytic function space.

\noindent
\begin{proof}[Proof of Proposition \ref{independentLp}:]
Let $
\{\sigma_n\}_{n=0}^{\infty}\in\ell^\infty \ \ \text{and} \ \
\{\mu_n\}_{n=0}^{\infty}\in\big(H^2, H^p\big),$ and let $
  X_n=\sigma_n\xi_n+\mu_n, \ n\geq0,$
  where $\xi_n$ are independent standard Gaussian variables.
For 
  $f(z)=\sum_{n=0}^{\infty}a_nz^n\in H^2,
$ we write
 $
  \big(\mathcal{R}f\big)(z) =
  \sum_{n=0}^{\infty}a_n\sigma_n\xi_nz^n+\sum_{n=0}^{\infty}a_n\mu_nz^n,
  $
  which converges pointwisely in the unit disk $\mathbb{D}$ by Lemma \ref{radiusGaussian}. An application of Littlewood's theorem and the fact $(H^2, H^2)=\ell^{\infty}$ yields that
$\sum_{n=0}^{\infty}a_n\sigma_n\xi_nz^n\in H^p$ a.s.
Since
 $
\{\mu_n\}_{n=0}^{\infty}\in\big(H^2, H^p\big),
$
  we have
$
\sum_{n=0}^{\infty}a_n\mu_nz^n\in H^p.$
%
%\bigno
On the other hand, if $\{X_n\}_{n=0}^{\infty}$ has the $L_p$-property, then by symmetry,
$$
\sum_{n=0}^{\infty}a_n\big(\sigma_n(-\xi_n)+\mu_n\big)z^n
\in H^p \ \text{a.s.}
$$
So
$
\sum_{n=0}^{\infty}a_n\mu_nz^n\in H^p.
$
 Now assume that
$
\{\sigma_n\}_{n=0}^{\infty}\notin\ell^\infty.
$
There exists a function
$
g(z)=\sum_{n=0}^{\infty}b_nz^n\in H^2
$
such that
$
\sum_{n=0}^{\infty}b_n\sigma_nz^n\notin H^2.
$
 It follows that
$
\sum_{n=0}^{\infty}|b_n\sigma_n|^2=\infty.
$
 By Littlewood's theorem, we have
$
\sum_{n=0}^{\infty}b_n\sigma_n\xi_nz^n\notin\bigcup_{0<p<\infty}H^p \ \ \text{a.s.}
$
On the other hand,
$$
\sum_{n=0}^{\infty}b_n\sigma_n\xi_nz^n=\sum_{n=0}^{\infty}b_nX_nz^n-
\sum_{n=0}^{\infty}b_n\mu_nz^n
\in H^p \ \ \text{a.s.}
$$
This leads to a contradiction.
\end{proof}

%\bigno
%\begin{note}
%By $0-1$ law, we have
%\begin{equation}
%P\Bigg(\sum_{n=0}^{\infty}b_n\big(\sigma_n\xi_n+\mu_n\big)z^n
%\in H^p\Bigg)\in\{0, 1\}.
%\end{equation}
%\end{note}

\noindent
\begin{proof}[Proof of Lemma \ref{reductionzero}:]
It is sufficient to show that if $\sum_{n=0}^{\infty}a_nX_nz^n \in H^p \ \text{a.s.}$,  then $
\{\mathbb{E}X_n\}_{n=0}^{\infty} \in\big(H^2, H^p\big).$
Since  $\{X_n-\mathbb{E}X_n\}_{n=0}^{\infty}$ is  a symmetric process,  meaning that  $\{X_n-\mathbb{E}X_n\}_{n=0}^{\infty}$ has the same law as
the process $\{-(X_n-\mathbb{E}X_n)\}_{n=0}^{\infty},$ it follows that
$
\sum_{n=0}^{\infty}a_n\big(-(X_n-\mathbb{E}X_n)+\mathbb{E}X_n\big)z^n\in H^p \ \text{a.s.}
$
So $\sum_{n=0}^{\infty}a_nX_nz^n \in H^p \ \text{a.s.}$ implies that
$
\sum_{n=0}^{\infty}a_n(\mathbb{E}X_n)z^n\in H^p.
$
\end{proof}

\section{A new formulation and  proof of the main result}

\noindent
In this section we  prove the following Theorem \ref{mainresult} which covers Theorem \ref{T:main} in the introduction. As mentioned before, a new viewpoint is  part (1)  which recasts the classical membership problem as the boundedness of an operator. This enables us to use tools in functional analysis, including the principle of uniform boundedness and Bessel sequences. This  sheds some new light on  this area, even in the case of independent random variables. We let $L^p(\Omega, H^q) \ (1\leq p, q<\infty)$ denote the collection of random vectors in $H^q$, i.e., $X: \Omega\to H^q$, such that $$||X||_{L^p(\Omega, H^q)}^p=\mathbb{\mathbb{E}}||X||_{H^q}^p<\infty.$$

\begin{theorem}\label{mainresult}
Let $\{X_n\}_{n=0}^{\infty}$ be a centered Gaussian process defined on a probability space $(\Omega, \mathcal{F}, P).$
\begin{itemize}
  \item[(1)] Let $p\in[1, \infty).$ Then for all $f(z)=\sum_{n=0}^{\infty}a_nz^n\in H^2,$ $\mathcal{R}f\in H^p$ a.s. if and only if $\mathcal{R}: H^2\to L^2(\Omega, H^p)$ is bounded.
  \item[(2)]If the covariance matrix $K$ of the centered Gaussian process $\{X_n\}_{n=0}^{\infty}$ is bounded on $\ell^2,$ then
$\mathcal{R}: H^2\to L^2(\Omega, H^p)$ is bounded for every $p\geq1.$
\end{itemize}
\end{theorem}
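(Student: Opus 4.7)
The plan is to treat the two parts in sequence: part (1) is a soft functional-analytic reformulation, while part (2) is the quantitative core in which the $\ell^2$-boundedness of $K$ enters as a single operator estimate.

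For part (1), the direction $(\Leftarrow)$ is immediate since $L^2(\Omega,H^p)$-boundedness forces $\mathcal{R}f\in H^p$ almost surely. For the nontrivial direction, I plan to apply the closed graph theorem to $\mathcal{R}\colon H^2\to L^2(\Omega,H^p)$, after first establishing that the operator is well-defined, i.e.\ that $\mathbb{E}\|\mathcal{R}f\|_{H^p}^2<\infty$ for every $f\in H^2$. This hinges on the fact that, under the standing hypothesis, $\mathcal{R}f$ is a centered Gaussian random element of the separable Banach space $H^p$: every continuous linear functional on $H^p$ (in particular every Taylor coefficient evaluation) applied to $\mathcal{R}f$ is a linear combination of the $X_n$'s, hence Gaussian. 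Fernique's theorem then promotes almost-sure finiteness of $\|\mathcal{R}f\|_{H^p}$ to $\mathbb{E}\exp(\alpha\|\mathcal{R}f\|_{H^p}^2)<\infty$ for some $\alpha>0$, whence every moment is finite. For graph-closedness: if $f_n\to f$ in $H^2$ and $\mathcal{R}f_n\to g$ in $L^2(\Omega,H^p)$, pass to a subsequence converging a.s.\ in $H^p$, and compare the $k$-th Taylor coefficients on both sides to identify $g=\mathcal{R}f$ almost surely.

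For part (2), by (1) it suffices to prove $\mathbb{E}\|\mathcal{R}f\|_{H^p}^2\leq C_p\|K\|\,\|f\|_{H^2}^2$. Fix $0<r<1$ and $\theta\in[0,2\pi)$, and set $b=(a_nr^ne^{in\theta})_{n\geq 0}\in\ell^2$. Then $\mathcal{R}f(re^{i\theta})=\sum_n b_nX_n$ is a (complex) Gaussian with variance
$$V(r,\theta)=\langle Kb,b\rangle_{\ell^2}\leq\|K\|\,\|b\|_{\ell^2}^2\leq\|K\|\,\|f\|_{H^2}^2,$$
and the estimate is uniform in $r$ and $\theta$. The standard Gaussian moment bound $\mathbb{E}|Z|^p\leq c_p(\mathbb{E}|Z|^2)^{p/2}$, Fubini, and monotone convergence as $r\to 1^-$ (justified by the subharmonicity of $|\mathcal{R}f|^p$, which makes $r\mapsto\int_0^{2\pi}|\mathcal{R}f(re^{i\theta})|^p\,d\theta$ nondecreasing) together yield $\mathbb{E}\|\mathcal{R}f\|_{H^p}^p\leq 2\pi c_p\|K\|^{p/2}\|f\|_{H^2}^p$. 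Passing from $L^p(\Omega)$ to $L^2(\Omega)$ moments is automatic by Jensen's inequality when $p\geq 2$, and for $1\leq p<2$ follows from the Kahane--Khinchine equivalence of moments for $H^p$-valued Gaussian vectors.

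The main obstacle is conceptual rather than technical: the classical Littlewood argument uses independence of the $\epsilon_n$'s to bound the contribution of each monomial separately, and that line of reasoning collapses under correlation. The remedy built into this plan is to pass through the pointwise variance $V(r,\theta)$, which captures the entire dependence structure inside the single operator $K$, and then to replace independence by the scalar bound $\langle Kb,b\rangle\leq\|K\|\,\|b\|_{\ell^2}^2$ that is uniform in $(r,\theta)$. A secondary technical check is that $\mathcal{R}f$ really is a Banach-space-valued Gaussian vector in the required sense, so that Fernique and Kahane--Khinchine apply; this reduces to Gaussianity of all finite-dimensional projections, which is automatic from the construction.
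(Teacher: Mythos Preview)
Your proposal is correct. For part~(2) your argument is essentially the paper's: both pivot on the single operator inequality $\mathbb{E}\bigl|\sum_n b_n X_n\bigr|^2=\langle Kb,b\rangle\le\|K\|\,\|b\|_{\ell^2}^2$ (the Bessel-sequence bound), then lift to $p$-th moments by scalar Gaussian hypercontractivity, apply Fubini, and finally pass from $L^p(\Omega,H^p)$ to $L^2(\Omega,H^p)$ via Kahane--Khinchine (for $p<2$) or monotonicity (for $p\ge2$). The only cosmetic difference is that the paper works with polynomial truncations and a density argument, whereas you use radial dilations and monotone convergence; these are interchangeable.

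For part~(1) you take a genuinely different route. The paper introduces the family $\mathcal{R}^{(r)}f(z)=\sum a_n r^n X_n z^n$, checks each $\mathcal{R}^{(r)}$ is bounded by an explicit Cauchy--Schwarz estimate, shows $\mathcal{R}^{(r)}f\to\mathcal{R}f$ in $L^2(\Omega,H^p)$ via Fernique and Fatou, and then invokes the uniform boundedness principle. You instead use the closed graph theorem directly on $\mathcal{R}$, after Fernique guarantees $\mathcal{R}f\in L^2(\Omega,H^p)$. Your approach is somewhat cleaner in that it avoids the auxiliary family $\{\mathcal{R}^{(r)}\}$ and the pointwise convergence step; the paper's approach has the advantage of making the approximating operators fully explicit, which feeds into the norm estimates recorded in its remarks. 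One small point worth tightening in your write-up: the claim that $\phi(\mathcal{R}f)$ is Gaussian for every $\phi\in(H^p)^*$ is not literally because $\phi(\mathcal{R}f)$ is a \emph{finite} linear combination of the $X_n$'s, but because it is an a.s.\ (or $L^2$) limit of such combinations, e.g.\ via $\phi(\mathcal{R}f)=\lim_{r\to1}\phi\bigl((\mathcal{R}f)_r\bigr)$; this is routine but should be said.
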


\noindent We first  illustrate the ideas in the independent case. Let  $f(z)=\sum_{k=0}^{n}a_kz^k.$
By the Kahane-Khintchine inequality (\cite{Tuomas17}, Theorem 6.2.4),
 for each $1\leq q<\infty,$ we have
\begin{equation}\label{KKinequality}
\bigg|\bigg|\sum_{k=0}^{n}a_k\epsilon_kz^k\bigg|\bigg|_{L^q(\Omega, H^p)}\leq\kappa_{p,q}\bigg|\bigg|\sum_{k=0}^{n}a_k\epsilon_kz^k\bigg|\bigg|_{L^p(\Omega, H^p)}.
\end{equation}
Then, using Fubini's theorem and  the Kahane-Khintchine inequality again, we have
$$
\bigg|\bigg|\sum_{k=0}^{n}a_k\epsilon_kz^k\bigg|\bigg|_{L^p(\Omega, H^p)}
=\bigg(\int_{0}^{2\pi}\mathbb{E}\bigg|\sum_{k=0}^{n}a_k\epsilon_ke^{ik\theta}\bigg|^p\frac{d\theta}{2\pi}\bigg)^{1/p} \leq C_p\bigg(\sum_{k=0}^{n}|a_k|^2\bigg)^{1/2}.
$$
It follows that
$$
\bigg|\bigg|\sum_{k=0}^{n}a_k\epsilon_kz^k\bigg|\bigg|_{L^q(\Omega, H^p)}\leq C_{p,q}\bigg(\sum_{k=0}^{n}|a_k|^2\bigg)^{1/2}
$$ for some constant $C_{p,q}$.
By a density argument, we  conclude that
$
\mathcal{R}: \ H^2\to L^q(\Omega, H^p)
$
is bounded. In other words, we can prove that  the following are equivalent:

\begin{itemize}
  \item[(i)]
  $
  \mathcal{R}: \ H^2\to L^q(\Omega, H^p)
$
is bounded for every $1\leq q<\infty.$
  \item[(ii)]
  $
  \mathcal{R}: \ H^2\to L^q(\Omega, H^p)
$
is bounded for some $q \ (1\leq q<\infty).$
  \item[(iii)]
 $
\mathcal{R}f\in H^p \ \ \text{a.s. for all $f\in H^2.$}
$
\end{itemize}

\noindent \emph{Remark.}
  If  $\mathcal{R}$ is given by a standard Gaussian sequence, then the norm of $
\mathcal{R}: \ H^2\to L^q(\Omega, H^p)
$ admits the following estimates:
\[
||\mathcal{R}||\leq\left\{
\begin{array}{ll}
2\Big(\frac{1}{\sqrt{\pi}}
\Gamma\big(\frac{q+1}{2}\big)\Big)^{1/q} \ & \ 1\leq p\leq q;\\
2\Big(\frac{1}{\sqrt{\pi}}
\Gamma\big(\frac{p+1}{2}\big)\Big)^{1/p} \ & \ 1\leq q\leq p.
\end{array}
\right.
\]
The   constants are derived from  Corollary 3 in \cite{99Latala}. For   Rademacher  or Steinhaus sequences, similar estimates are available.

\

\noindent
 Now for the convenience of the reader, we collect some facts before  the proof of Theorem \ref{mainresult}.  By Fernique's theorem (\cite{RandomSeries},  p. 176)
or by Corollary 3.2 in \cite{91LedouxTalagrand}, together with  Corollary 5.3 in \cite{02Latala},  we have

\begin{lemma}\label{equivalentmoment}
Let $\{X_n\}_{n=0}^\infty$  be a centered Gaussian process satisfying the {$L_p$-property} for some $1\leq p<\infty$. Then for $1\leq q<\infty$ and for each $\{a_n\}_{n=0}^{\infty}\in\ell^2,$
we have
\begin{itemize}
  \item[(i)] $1\leq q<2,$
  $$
  \bigg|\bigg|\sum_{n=0}^{\infty}a_nX_nz^n\bigg|\bigg|_{L^q(\Omega, H^p)}\leq
  \bigg|\bigg|\sum_{n=0}^{\infty}a_nX_nz^n\bigg|\bigg|_{L^2(\Omega, H^p)}\leq
  \frac{2}{c_q}\bigg|\bigg|\sum_{n=0}^{\infty}a_nX_nz^n\bigg|\bigg|_{L^q(\Omega, H^p)};
  $$
  \item[(ii)] $q\geq2,$
  $$
  \bigg|\bigg|\sum_{n=0}^{\infty}a_nX_nz^n\bigg|\bigg|_{L^2(\Omega, H^p)}\leq
  \bigg|\bigg|\sum_{n=0}^{\infty}a_nX_nz^n\bigg|\bigg|_{L^q(\Omega, H^p)}\leq
  2c_q\bigg|\bigg|\sum_{n=0}^{\infty}a_nX_nz^n\bigg|\bigg|_{L^2(\Omega, H^p)},
  $$
\end{itemize}
where
 $
c_q=\big(\mathbb{E}|\xi|^q\big)^{1/q}=\sqrt{2}\Big(\frac{1}{\sqrt{\pi}}
\Gamma\big(\frac{q+1}{2}\big)\Big)^{1/q}
$
and $\xi$ satisfies the standard Gaussian distribution.

\end{lemma}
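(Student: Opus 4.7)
The plan is to regard $Y := \mathcal{R}f = \sum_{n=0}^{\infty} a_n X_n z^n$, under the $L_p$-property hypothesis, as a centered Gaussian random element of the separable Banach space $H^p$ (recall $1 \leq p < \infty$), and then to invoke the sharp Kahane--Khintchine moment comparisons for Banach-valued Gaussians.

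First I would check that $Y$ is genuinely a centered Gaussian vector in $H^p$, not merely a random formal series. The hypothesis gives $Y \in H^p$ almost surely, so $Y$ is a well-defined $H^p$-valued random variable (measurability coming from the pointwise measurability of the partial sums together with the separability of $H^p$). Each partial sum $Y_N = \sum_{n=0}^N a_n X_n z^n$ is a centered Gaussian vector in $H^p$, being a finite linear combination of the scalar centered Gaussians $X_0, \ldots, X_N$ with deterministic $H^p$-coefficients $a_n z^n$. For every point-evaluation functional $\delta_z$ with $z \in \mathbb{D}$, one has $\delta_z(Y_N) \to \delta_z(Y)$ pointwise a.s.\ in $\omega$ by convergence of the power series; since pointwise limits of centered Gaussians are centered Gaussian, $\delta_z(Y)$ is Gaussian for every $z \in \mathbb{D}$. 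A density argument (point evaluations are total in $(H^p)^*$ for $1 \leq p < \infty$) then upgrades this to $\varphi(Y)$ being Gaussian for every $\varphi \in (H^p)^*$, so $Y$ is a Gaussian vector in $H^p$.

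Next I would apply Fernique's theorem (see \cite{RandomSeries}, p.~176) to $Y$, obtaining $\mathbb{E}\exp(\varepsilon \|Y\|_{H^p}^2) < \infty$ for some $\varepsilon > 0$, hence $\|Y\|_{H^p} \in L^q(\Omega)$ for every $q \geq 1$. The trivial halves of (i) and (ii), namely $\|Y\|_{L^q(\Omega, H^p)} \leq \|Y\|_{L^2(\Omega, H^p)}$ for $q < 2$ and $\|Y\|_{L^2(\Omega, H^p)} \leq \|Y\|_{L^q(\Omega, H^p)}$ for $q \geq 2$, are immediate from Jensen's inequality on the probability space $\Omega$. The nontrivial halves, with the explicit constants $2/c_q$ and $2c_q$, are exactly Corollary 5.3 of \cite{02Latala} (alternatively Corollary 3.2 of \cite{91LedouxTalagrand}) applied to the Banach-valued Gaussian $Y$; that result is precisely the Kahane--Khintchine comparison for Banach-valued Gaussians expressed through the scalar Gaussian moments $c_q = (\mathbb{E}|\xi|^q)^{1/q}$.

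The principal (though essentially routine) obstacle is the first step: correctly identifying $Y$ as a Gaussian random vector in the Banach space $H^p$, rather than merely as a Gaussian process indexed by $\mathbb{D}$, so that the general Fernique--Latała machinery becomes directly applicable. Once this identification is performed, the rest of the argument is a direct citation of well-known Gaussian inequalities, with the stated constants appearing without further manipulation.
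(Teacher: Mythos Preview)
Your proposal is correct and follows essentially the same route as the paper, which simply states that the lemma follows from Fernique's theorem (\cite{RandomSeries}, p.~176) or Corollary~3.2 in \cite{91LedouxTalagrand}, together with Corollary~5.3 in \cite{02Latala}; you have merely spelled out the verification that $Y=\mathcal{R}f$ is a centered Gaussian vector in $H^p$, which the paper takes for granted. One small remark: your density step via point evaluations is a bit delicate as written (weak-$*$ density alone does not obviously pass through a.s.\ limits), but it is easily repaired by instead noting that the dilations $Y^{(r)}(z)=Y(rz)$ are $H^p$-norm limits of the polynomial partial sums and that $Y^{(r)}\to Y$ in $H^p$ a.s., so $\varphi(Y)$ is an a.s.\ limit of centered Gaussians for every $\varphi\in (H^p)^*$.
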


\noindent
Recall that a sequence $\{X_k\}_{k=0}^{\infty}$
 in a separable Hilbert space $\mathcal{H}$ is called \textbf{a Bessel sequence} if there exists $C>0$ such that
% \begin{equation}\label{66}
$ \sum_{k=0}^{\infty}\big|\langle X, X_k\rangle\big|^2\leq C||X||^2 $ for all $ X\in\mathcal{H}.$
% \end{equation}
The constant $C=C_{\text{BES}}$   is called a Bessel bound for the Bessel sequence $\{X_k\}_{k=0}^{\infty}$.

\begin{proposition}[\cite{Christensen}]\label{Bessel}
Let $\{X_k\}_{k=0}^{\infty}$ be a sequence of real Gaussian variables in $L^2(\Omega)$ and $C>0.$ Then the following statements are equivalent.
\begin{itemize}
  \item[(i)] $\{X_k\}_{k=0}^{\infty}$ is a Bessel sequence in $L^2(\Omega)$ with a Bessel bound $C_{\text{BES}}.$
  \item[(ii)]
  $
  \sum_{k=0}^{\infty}\la_kX_k
  $
  converges in $L^2(\Omega)$ for  $\la=\{\la_k\}_{k=0}^{\infty}\in\ell^2$ and
%  \begin{equation}
  $\big\|\sum_{k=0}^{\infty}\la_kX_k\big\|\leq \sqrt{C_{\text{BES}}}||\la||_2.$
%  \end{equation}
  \item[(iii)] The infinite Gram matrix $\big(\mathbb{E}(X_iX_j)\big)_{i,j\geq0}$ defines a bounded linear operator on $\ell^2$ with norm at most $C_{\text{BES}}.$
\end{itemize}
\end{proposition}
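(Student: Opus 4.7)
The plan is to recognize that all three conditions are reformulations of a single statement, namely that the \emph{synthesis operator} $T: \ell^2 \to L^2(\Omega)$, formally given by $T\la = \sum_k \la_k X_k$, is bounded with $\|T\|^2 \le C_{\text{BES}}$. The Gaussian nature of the $X_k$ will play no role; the proposition is a general Hilbert-space fact about Bessel sequences in $L^2(\Omega)$. Its partner is the \emph{analysis operator} $T^*$, which, once boundedness is in hand, acts as $T^*X = \{\langle X, X_k\rangle\}_{k\ge 0}$.

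To establish (i) $\Leftrightarrow$ (ii), I would first work with finite truncations $T_N\la = \sum_{k\le N}\la_k X_k$ to avoid any circularity in the definition of $T$. Under the Bessel hypothesis (i), expanding $\|T_N\la\|^2 = \sum_{k\le N} \la_k \langle T_N\la, X_k\rangle$ and applying Cauchy--Schwarz in $k$ together with the Bessel bound on the inner products yields a uniform estimate $\|T_N\la\| \le \sqrt{C_{\text{BES}}}\,\|\la\|_2$; the same computation on a tail produces Cauchyness, hence convergence to some $T\la$ satisfying $\|T\la\| \le \sqrt{C_{\text{BES}}}\,\|\la\|_2$. The converse direction uses the standard duality: test the identity $\langle T\la, X\rangle = \sum_k \la_k\langle X_k, X\rangle$ against a truncation of the choice $\la_k = \langle X, X_k\rangle$ to recover the Bessel inequality with the same constant.

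For (ii) $\Leftrightarrow$ (iii), the key observation is that the infinite Gram matrix is exactly the matrix of $G := T^*T$ in the standard basis of $\ell^2$, since $\langle G e_i, e_j\rangle = \langle T e_i, T e_j\rangle = \mathbb{E}(X_i X_j)$; hence $\|G\|_{\ell^2\to\ell^2} = \|T^*T\| = \|T\|^2$, and the constants match. The direction (iii) $\Rightarrow$ (ii), which cannot presuppose boundedness of $T$, is handled by computing $\|\sum_{k=M}^N \la_k X_k\|^2 = \sum_{i,j=M}^N \la_i \la_j\, \mathbb{E}(X_i X_j)$ and bounding this quadratic form by $\|G\|\sum_{k=M}^N |\la_k|^2$ via a finite principal compression of $G$; this delivers convergence of the series and the norm estimate simultaneously.

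I do not foresee a serious obstacle: the argument is entirely standard frame/Hilbert-space theory, and the only real care required is the bookkeeping of constants at each step and the discipline of defining $T$ and $T^*$ only after the relevant boundedness has been secured. In fact the result appears essentially verbatim in Christensen's monograph \cite{Christensen}, which the excerpt cites, so a streamlined write-up would most likely just invoke it.
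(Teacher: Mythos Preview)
Your proposal is correct, and indeed your closing remark is exactly on point: the paper does not prove this proposition at all but simply quotes it from Christensen's monograph \cite{Christensen}, so there is no ``paper's own proof'' to compare against. Your sketch is the standard frame-theoretic argument (synthesis/analysis operators, $G=T^*T$, finite truncations), and your observation that the Gaussian hypothesis is irrelevant is also correct.
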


\noindent
\begin{proof}[Proof of (1) of Theorem \ref{mainresult}:]
It is sufficient to prove the necessity.   Consider $1\leq p\leq2$ first. For each $0<r<1,$ let
$
\mathcal{R}^{(r)}: H^2\to L^2(\Omega, H^p)
$ be
$
\big(\mathcal{R}^{(r)}f\big) (z)=\sum_{n=0}^{\infty}a_nr^nX_nz^n.
$
Since $\sum_{n=0}^{\infty}a_nX_nz^n\in H^p$ a.s., by Fernique's theorem,
\begin{equation}\label{Fernique111}
\exp\bigg(\la\bigg|\bigg|\sum_{n=0}^{\infty}a_nX_nz^n\bigg|\bigg|_{H^p}^2\bigg)\in L^1(\Omega)
\end{equation}
for some $\la>0.$
Then, by Minkowski's inequality and the Cauchy-Schwarz inequality,
\begin{eqnarray*}
||\mathcal{R}^{(r)}f||_{L^2(\Omega, H^p)}&\leq&\bigg(\int_{0}^{2\pi}\bigg(\mathbb{E}\bigg|\sum_{n=0}^{\infty}a_nr^nX_ne^{in\theta}\bigg|^2\bigg)^{p/2}\frac{d\theta}{2\pi}\bigg)^{1/p}\\
&\leq&\bigg(\sum_{n=0}^{\infty}r^{2n}\mathbb{E}X_n^2\bigg)^{1/2}\bigg(\sum_{n=0}^{\infty}|a_n|^2\bigg)^{1/2}\\
&\leq&C_r\bigg(\sum_{n=0}^{\infty}|a_n|^2\bigg)^{1/2},
\end{eqnarray*}
where the last $\leq$ is  due to  Lemma \ref{radiusGaussian}.
So $\big\{\mathcal{R}^{(r)}, \ 0<r<1\big\}$ is a collection of continuous linear operators from $H^2$ to $L^2(\Omega, H^p).$
Write $\tilde{f}_\omega(\theta)$ as the boundary value function of $\sum_{n=0}^{\infty}a_nX_nz^n.$
Combining with the definition of $H^p,$ we have
$$
\bigg(\int_{0}^{2\pi}\bigg|
\sum_{n=0}^{\infty}a_nX_nr^ne^{in\theta}- \tilde{f}_\omega(\theta)\bigg|^p\frac{d\theta}{2\pi}\bigg)^{1/p}\leq2
\bigg(\int_{0}^{2\pi}\big| \tilde{f}_\omega(\theta)\big|^p\frac{d\theta}{2\pi}\bigg)^{1/p}
$$
for all $0<r<1.$
Then, by (\ref{Fernique111}) which implies that the above right-hand side belongs to $L^2(\Omega)$, and  Fatou's lemma,
\begin{eqnarray*}
\limsup_{r\to1}\big|\big|\mathcal{R}^{(r)}f-\mathcal{R}f\big|\big|_{L^2(\Omega, H^p)}^2
\leq\mathbb{E}\bigg(\limsup_{r\to1}\bigg(\int_{0}^{2\pi}\bigg|
\sum_{n=0}^{\infty}a_nX_nr^ne^{in\theta}- \tilde{f}_\omega(\theta)\bigg|^p\frac{d\theta}{2\pi}\bigg)^{2/p}\bigg),
\end{eqnarray*}
which is equal to 0.
 So
$
\mathcal{R}f=\lim_{r\to1}\mathcal{R}^{(r)}f \ \ (\text{in $L^2(\Omega, H^p)$}).
$
Hence, by the uniform boundedness principle, we conclude that $\mathcal{R}$ is bounded.
For  $p>2,$ it is sufficient to show that  $\mathcal{R}: H^2\to L^p(\Omega, H^p)$ is bounded.
Then the proof   is similar to that of $1\le p \le 2$, and is skipped.

\

\noindent
\emph{Proof of (2) of Theorem \ref{mainresult}:} For each polynomial $\sum_{k=0}^{n}a_kz^k,$ if $1\leq p<2,$ then by Lemma \ref{equivalentmoment},

\begin{eqnarray*}
\bigg|\bigg|\sum_{k=0}^{n}a_kX_kz^k\bigg|\bigg|_{L^2(\Omega, H^p)}&\leq&
C_p\bigg|\bigg|\sum_{k=0}^{n}a_kX_kz^k\bigg|\bigg|_{L^p(\Omega, H^p)}\\
&\leq& C_p'\bigg(\int_{0}^{2\pi}\bigg(\mathbb{E}\bigg|\sum_{k=0}^{n}
a_kX_ke^{ik\theta}\bigg|^2\bigg)^{p/2}\frac{d\theta}{2\pi}\bigg)^{1/p}\\
&\leq&C_1\bigg(\sum_{k=0}^{n}|a_k|^2\bigg)^{1/2},
\end{eqnarray*}
where the last ``$\leq$" is due to  Proposition \ref{Bessel}.
Similarly, if $2\leq p<\infty,$ then
\begin{eqnarray*}
\bigg|\bigg|\sum_{k=0}^{n}a_kX_kz^k\bigg|\bigg|_{L^2(\Omega, H^p)}\leq
\bigg|\bigg|\sum_{k=0}^{n}a_kX_kz^k\bigg|\bigg|_{L^p(\Omega, H^p)}
\leq C_2\bigg(\sum_{k=0}^{n}|a_k|^2\bigg)^{1/2}.
\end{eqnarray*}
 Now a density argument enables us to conclude that $\mathcal{R}: H^2\to L^2(\Omega, H^p)$ is bounded.
\end{proof}

\noindent \emph{Remarks.}
\begin{itemize}
  \item For (2), if the Bessel bound of $\{X_n\}_{n=0}^{\infty}$ is $C_{\text{BES}},$ then we have
\[
||\mathcal{R}||\leq\left\{
\begin{array}{ll}
2\sqrt{2C_{\text{BES}}} \ & \ 1\leq p<2;\\
\frac{2\sqrt{C_{\text{BES}}}\big(\Gamma\big(\frac{p+1}{2}\big)\big)^{1/p}}{\pi^{\frac{1}{2p}}} \ & \ 2\leq p<\infty.
\end{array}
\right.
\]

\item

If $p\geq2$ and $\mathcal{R}f\in H^p$ a.s., then there exists a constant $C_1>0$ such that
$$
||\mathcal{R}f||_{L^2(\Omega, H^p)}\geq C_1 \inf_{n\geq0} \big(\mathbb{E}X_n^2\big)^{1/2}||f||_{H^2};
$$
  If $1\leq p\leq2,$ then there exists a constant $C_2$ such that
$$
||\mathcal{R}f||_{L^2(\Omega, H^p)}\leq C_2 \sup_{n\geq0} \big(\mathbb{E}X_n^2\big)^{1/2}||f||_{H^2}.
$$
We can take $C_1=\frac{1}{4}, \ C_2=2\sqrt{2}.$
 By the equivalence of all moments of Gaussian variables, we can replace $L^2(\Omega, H^p)$ by $L^q(\Omega, H^p)$ for any $q\geq1.$
  It is not clear to us whether the above estimates hold for all $p.$ Our restriction on $p$ is due to the use of Jensen's inequality.

\item  For $p=2$,
$\mathcal{R}: H^2\to L^2(\Omega, H^2)$ is bounded if and only if
$\{\mathbb{E}X_n^2\}_{n=0}^\infty \in\ell^\infty.$
Indeed, $$||\mathcal{R}||=\sup_{n\ge 0}\Big(\mathbb{E}X_n^2\Big)^{1/2}.$$

\end{itemize}

\section{Exponential estimates}

\noindent
The following proof of the Paley-Zygmund-type exponential estimates (Proposition \ref{exponentintegral}) is modified from the corresponding arguments for the independent case (\cite{RandomSeries}, pp. 51-52), hence we only outline where the arguments are different.

\noindent
\begin{proof}[Proof of Proposition \ref{exponentintegral}:]
Since $K$ is bounded on $\ell^2,$ we have
$\sup_{n\geq0}\mathbb{E}X_n^2<\infty$ by Theorem \ref{mainresult}. It follows that $\sum_{n=0}^{\infty}a_nX_ne^{in\theta}\in L^2[0,2\pi]$ a.s. So for almost all $\theta,$ $\sum_{n=0}^{\infty}a_nX_ne^{in\theta}$ converges a.s., which is due to Fubini. For any real sequence  $\{\psi_n\}_{n=0}^{\infty}$,
\begin{eqnarray*}
\mathbb{E}\exp\bigg(\la\bigg|\sum_{n=0}^{\infty}a_nX_n\cos(n\theta+\psi_n)\bigg|^2\bigg)
%&=&\sum_{n=0}^{\infty}\frac{\la^n\cdot (2n-1)!!}{n!}\Bigg(\mathbb{E}\bigg(\sum_{k=0}^{\infty}a_kX_k\cos(k\theta+\psi_k)\bigg)^{2}\Bigg)^n\\
\leq\sum_{n=0}^{\infty}\frac{\la^n\cdot (2n-1)!!\cdot||K||^{2n}}{n!}\bigg(\sum_{k=0}^{\infty}|a_k|^2\bigg)^n<\infty,
\end{eqnarray*}
if $\la<\frac{1}{2||K||^2\sum_{k=0}^{\infty}|a_k|^2}.$ Here we use the Bessel sequence (Proposition \ref{Bessel}).
By Fubini again,
  \begin{equation}\label{12}
  \int_{0}^{2\pi}\exp\bigg(\la\bigg|\sum_{n=0}^{\infty}a_nX_n\cos(n\theta+\psi_n)\bigg|^2\bigg)d\theta<\infty \ \ \text{a.s.}
  \end{equation}
  whenever $\la<\frac{1}{2||K||^2\sum_{k=0}^{\infty}|a_k|^2}.$
For any $\la>0,$  (\ref{12}) still holds by a trick  used in (\cite{RandomSeries}, p. 52).
\end{proof}

\section{Examples}\label{S:E}

\noindent Various examples are collected in this section to illustrate the scope of Theorem \ref{T:main}.

\subsection{Band matrix}

 \noindent The case of  band-limited Gaussian processes admits a neat answer. This clearly generalizes Littlewood's original theorem as well. It is  interesting to observe that only the main diagonal matters.

\begin{proposition}\label{P:band}
Let $\{X_n\}_{n=0}^{\infty}$ be a band-limited, centered Gaussian process. That is, its covariance matrix
      $
       K=\big(\mathbb{E}(X_{i}X_{j})\big)_{i, \ j\geq0}
       $
is a band matrix, with some positive integer $M$ such that
      $
     \mathbb{E}(X_{i}X_{j})=0
     $
     when $
      |i-j|\geq M.$
 Then
    $\{X_n\}_{n=0}^{\infty}$ has the L-property if and only if
      $
  \{\mathbb{E}X_n^2\}_{n=0}^{\infty}\in\ell^\infty.
  $
\end{proposition}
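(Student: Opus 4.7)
The plan is to prove both directions by reducing to results already established in the paper, with the only substantive input being a routine decomposition of a band matrix into finitely many weighted shifted-diagonal operators.

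For necessity, I would use the last bulleted remark after the proof of Theorem \ref{mainresult}, which says that $\mathcal{R}\colon H^2\to L^2(\Omega,H^2)$ is bounded if and only if $\{\mathbb{E}X_n^2\}_{n=0}^\infty\in\ell^\infty$, with norm $\sup_n(\mathbb{E}X_n^2)^{1/2}$. If $\{X_n\}$ has the $L$-property, then in particular $\mathcal{R}f\in H^2$ a.s. for every $f\in H^2$, so part (1) of Theorem \ref{mainresult} applied with $p=2$ gives boundedness of $\mathcal{R}\colon H^2\to L^2(\Omega,H^2)$, whence $\{\mathbb{E}X_n^2\}\in\ell^\infty$. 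This direction does not use the band structure at all.

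For sufficiency, the aim is to show that the band assumption together with $\{\mathbb{E}X_n^2\}\in\ell^\infty$ forces $K$ to be bounded on $\ell^2$; Theorem \ref{T:main} then delivers the $L$-property. Set $C\doteq\sup_n\mathbb{E}X_n^2<\infty$. By Cauchy--Schwarz, every entry of $K$ is bounded:
\[
|K_{ij}|=|\mathbb{E}(X_iX_j)|\le (\mathbb{E}X_i^2)^{1/2}(\mathbb{E}X_j^2)^{1/2}\le C.
\]
Now decompose $K=\sum_{k=-(M-1)}^{M-1} D_k$, where $D_k$ has entries of $K$ on the $k$-th diagonal and zeros elsewhere. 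Each $D_k$ acts on $\ell^2$ as a shift composed with a bounded multiplication operator by the diagonal sequence $\{K_{i,i+k}\}$; since $|K_{i,i+k}|\le C$, we have $\|D_k\|\le C$. Summing at most $2M-1$ such operators gives $\|K\|\le (2M-1)C<\infty$, and Theorem \ref{T:main} completes the proof.

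There is no real obstacle: the content is that in the band-limited setting, off-diagonal covariances, being bounded by the diagonal ones via Cauchy--Schwarz and living on only finitely many shifted diagonals, cannot spoil the $\ell^2$-boundedness of $K$. The only thing to watch is that the bandwidth $M$ enters only through the constant $2M-1$, so the hypothesis reduces cleanly to $\{\mathbb{E}X_n^2\}\in\ell^\infty$, which is exactly the necessary condition supplied by the first step.
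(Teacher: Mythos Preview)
Your proof is correct, and the sufficiency argument is genuinely simpler than the paper's. Both approaches ultimately reduce to showing that $K$ is bounded on $\ell^2$ and then invoking Theorem~\ref{T:main}, but the route is different. The paper first perturbs to make $K$ strictly positive definite, passes to the canonical (Cholesky-type) representation $X_n=\sum_{k=0}^n b_k^n\xi_k$, argues that the band structure of $K$ forces the lower-triangular factor to be band-limited as well, and then appeals to Lemma~\ref{Sufficient1} (Schur's test on $AA^T$). Your argument bypasses all of this: bounding the entries by Cauchy--Schwarz and splitting $K$ into $2M-1$ weighted shifts gives $\|K\|\le(2M-1)\sup_n\mathbb{E}X_n^2$ in one line. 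The paper's detour has the side benefit of isolating Lemma~\ref{Sufficient1} as a reusable criterion, but for this proposition your direct decomposition is the cleaner and more transparent proof.

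For necessity you cite the $p=2$ remark after Theorem~\ref{mainresult} together with part~(1) of that theorem, whereas the paper cites Proposition~\ref{necessary2}. Your route is again the more direct one and is fully justified: the $L$-property includes the $L_2$-property, part~(1) converts this to boundedness of $\mathcal{R}\colon H^2\to L^2(\Omega,H^2)$, and the remark identifies that boundedness with $\sup_n\mathbb{E}X_n^2<\infty$.
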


\noindent We shall need the following criteria; its  proof is simply by Schur's test and Theorem \ref{T:main}.

\begin{lemma}\label{Sufficient1}
Let $X_n=\sum_{k=0}^{n}b^n_k\xi_k, \ n\geq0$,  be the canonical representation of a centered Gaussian process, where $\{\xi_k\}_{k\ge 0}$ is a standard Gaussian sequence. If there exists a sequence of real numbers $\eta_n, \ n\geq0$ such that for any $0 \le k \le n$,
%\begin{equation}\label{62}
$|b_k^n|\leq\eta_{n-k}  \ \ \text{and} \ \ \sum_{n=0}^{\infty}\eta_n<\infty,$
%\end{equation}
then $\{X_n\}_{n=0}^{\infty}$ has the L-property.
\end{lemma}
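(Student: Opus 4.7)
The plan is to invoke Theorem \ref{T:main}: it suffices to show that the covariance matrix $K=(\mathbb{E}(X_mX_n))_{m,n\ge 0}$ of $\{X_n\}_{n=0}^{\infty}$ is bounded on $\ell^2$. Because $\{\xi_k\}$ is a standard (independent) Gaussian sequence, the canonical representation gives
\[
K_{mn}=\sum_{k=0}^{\min(m,n)} b^m_k\, b^n_k, \qquad |K_{mn}|\;\le\; \sum_{k=0}^{\min(m,n)} \eta_{m-k}\,\eta_{n-k},
\]
so $|K_{mn}|$ is dominated by a truncated self-convolution of the sequence $(\eta_j)_{j\ge 0}$. This convolution structure is the reason the hypothesis $\sum_n \eta_n<\infty$ will be exactly the right input.

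The next step is to apply Schur's test to the symmetric nonnegative matrix $(|K_{mn}|)$. Fix $n$ and substitute $j=m-k$:
\[
\sum_{m=0}^{\infty}|K_{mn}|\;\le\;\sum_{k=0}^{n}\eta_{n-k}\sum_{j=0}^{\infty}\eta_{j}\;=\;\Big(\sum_{j\ge 0}\eta_j\Big)\sum_{k=0}^{n}\eta_{n-k}\;\le\;\Big(\sum_{j\ge 0}\eta_j\Big)^{\!2}.
\]
By symmetry of $K$, the corresponding column-sum bound holds as well, so Schur's test yields $\|K\|_{\ell^2\to\ell^2}\le\big(\sum_{j\ge 0}\eta_j\big)^{2}<\infty$. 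Theorem \ref{T:main} then delivers the $L$-property.

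I do not anticipate a real obstacle: the assumption $\sum \eta_n<\infty$ provides the $\ell^1$-control required for a Young/Schur style estimate, and the truncation $k\le\min(m,n)$ only tightens the sum. The only care needed is the legitimate interchange of the two finite (or absolutely convergent) sums after the substitution $j=m-k$, which is routine. A brief closing remark could note that this lemma directly implies Proposition \ref{P:band} on band matrices, since in that case only finitely many $\eta_j$ need be nonzero.
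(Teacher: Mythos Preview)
Your argument is correct and matches the paper's own proof, which is stated as ``simply by Schur's test and Theorem \ref{T:main}.'' The row/column sum estimate you give (via the substitution $j=m-k$ and nonnegativity of the $\eta_j$) is exactly the Schur-test computation intended.
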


\begin{proof}[Proof of Proposition \ref{P:band}]
The necessity  is due to Proposition \ref{necessary2}. For sufficiency, we observe that it is sufficient to consider the case that the covariance matrix is strictly positive definite. Otherwise, let $\{\xi_n\}_{n=0}^{\infty}$ be a standard Gaussian sequence independent with  $\{X_n\}_{n=0}^{\infty}$.
   We   consider
  $
  Y_n=X_n+\epsilon \xi_n
  $ for small $\epsilon>0$, which has a strictly positive definite covariance matrix. A useful fact is that  if $X=\{X_n\}_{n=0}^{\infty}$ and $Y=\{Y_n\}_{n=0}^{\infty}$ are independent centered Gaussian processes, then
$X$ and $Y$ have the $L$-property if and only if $X+Y$ has the $L$-property.

\smallskip

\noindent
Consider a canonical representation
$
X_n=\sum_{k=0}^{n}b_k^n\xi_k, \ n\geq0,
$ where $\xi_n, \ n\geq0$ are independent standard Gaussian variables.
The strictly positive definiteness assumption for the covariance matrix implies that $
b_k^k\neq0, \ k\geq0.$ Some linear algebra shows that $
b_j^k=0, \ 0\leq j\leq k-M$
for each $k\geq M.$
Combining with the assumption, we have
$|b_j^k|\leq\sup_{n\geq0}\big(\mathbb{E}X_n^2\big)^{1/2}, \ j,k\geq0.$
Then, by Lemma \ref{Sufficient1} (here, take $\eta_k=\sup_{n\geq0}\big(\mathbb{E}X_n^2\big)^{1/2}$ if $0\leq k\leq M-1;$ $\eta_k=0$ if $k\geq M$),
   $\{X_n\}_{n=0}^{\infty}$ has the $L$-property.
\end{proof}

\subsection{Stationary process}

\noindent
A (centered) Gaussian process $\{X_n\}_{n\in\mathbb{Z}}$ is called \textbf{stationary}, if for any  $n, h$ and $k,$ two random vectors
$\big(X_n, X_{n+1}, \ldots, X_{n+k-1}\big) $ and $ \big(X_{n+h}, X_{n+h+1}, \ldots, X_{n+h+k-1}\big)$
have the same distribution. Now let $X=\{X_n\}_{n\in\mathbb{Z}}$ be a stationary Gaussian process. Denote by
$
H_n(X)
$
the closed subspace of $L^2(\Omega)$ spanned by $\{X_k; \ k\leq n\}.$ If
$
\cap_{n\in\mathbb{Z}}H_n(X)\subset\mathbb{R},
$ then $X$ is said to be \textbf{purely nondeterministic}.

\begin{proposition}
If a centered Gaussian process $X=\{X_n\}_{n\in\mathbb{Z}}$ is stationary, purely nondeterministic and Markov, then $\{X_n\}_{n=0}^{\infty}$ has the L-property.
\end{proposition}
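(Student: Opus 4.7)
The plan is to reduce everything to Theorem \ref{T:main} by identifying the covariance matrix of such a process very explicitly. The key classical fact I would invoke is that a centered, stationary, purely nondeterministic Gaussian Markov process is necessarily of AR(1) type: its covariance function has the geometric form $r(k) := \mathbb{E}(X_0 X_k) = \sigma^2 \rho^{|k|}$ for some $\sigma^2 \ge 0$ and some $\rho$ with $|\rho|<1$.

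First I would derive this geometric structure. If $\sigma^2 = r(0) = 0$, the process is identically zero and the L-property holds trivially, so assume $\sigma^2>0$. Because conditional expectation equals linear projection for Gaussian variables, the Markov hypothesis says that the best linear predictor of $X_{n+1}$ from $(X_n, X_{n-1})$ must agree with the best linear predictor from $X_n$ alone. Writing out the normal equations forces the ratio $r(k+1)/r(k)$ to be independent of $k$, which yields $r(k)=\sigma^2\rho^{|k|}$ with $\rho:=r(1)/r(0)\in[-1,1]$. The boundary cases $|\rho|=1$ are excluded by pure nondeterminism: there $X_0$ (up to an alternating sign) would lie in every $H_n(X)$, contradicting $\cap_n H_n(X)\subset\mathbb{R}$.

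Second I would verify that the covariance matrix $K=\big(r(i-j)\big)_{i,j\ge 0}$ is bounded on $\ell^2$. This is a Toeplitz matrix, and a standard computation gives its symbol as
\[
f(\theta)=\sum_{k\in\mathbb{Z}}\sigma^2\rho^{|k|}e^{ik\theta}=\frac{\sigma^2(1-\rho^2)}{|1-\rho e^{i\theta}|^2}.
\]
Since $|1-\rho e^{i\theta}|\ge 1-|\rho|>0$, this symbol belongs to $L^\infty(\mathbb{T})$, so $\|K\|_{\ell^2\to\ell^2}\le\|f\|_\infty=\sigma^2(1+|\rho|)/(1-|\rho|)<\infty$.

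Finally, Theorem \ref{T:main} then immediately delivers the L-property. The only mildly delicate ingredient is the classical AR(1) identification in the first step; once that is in hand, the remainder is a direct Toeplitz symbol computation plugged into the main theorem, and I do not anticipate a substantial obstacle.
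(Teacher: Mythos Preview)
Your proposal is correct and follows essentially the same route as the paper: identify the AR(1)/geometric structure forced by stationarity, pure nondeterminism and the Markov property, deduce that the covariance matrix is bounded on $\ell^2$, and invoke Theorem~\ref{T:main}. The paper quotes Hida's corollary for the moving-average form $X_n=\sum_{j\le n}ac^{n-j}\xi_j$ and checks boundedness via Schur's test (Lemma~\ref{Sufficient1}), whereas you derive the covariance $r(k)=\sigma^2\rho^{|k|}$ directly and use the Toeplitz symbol, but these are interchangeable implementations of the same argument.
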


\begin{proof}
By our assumption on $X$ and by the corollary in   (\cite{Hida}, p. 42),
$X$ has the form
$
X_n=\sum_{j=-\infty}^{n}ac^{n-j}\xi_j, \ \ n\in\mathbb{Z}, \ 0<|c|<1.
$ The system
%\begin{equation}
 $\big\{ac^{n-j}, \ j\leq n; \ \xi_j, \ j\in\mathbb{Z}\big\}$
%\end{equation}
is the canonical representation of $X.$ Then one can apply Schur's test or  Proposition \ref{Sufficient1} to conclude that the covariance matrix of $\{X_n\}_{n=0}^{\infty}$ is bounded on $\ell^2$, hence the $L$-property by Theorem \ref{T:main}.
\end{proof}

\noindent \emph{Remark.} By the Toeplitz Theorem \cite{Toeplitz11} (see  \cite{Grudsky} also),   if $\{X_n\}_{n=0}^{\infty}$ is a centered stationary Gaussian process, then its covariance matrix $K$ is bounded on $\ell^2$ if and only if there exists some function $h\in L^\infty(\mathbb{T})$ such that the Fourier series of $h$ is $\sum_{n=-\infty}^{\infty}\mathbb{E}(X_0X_{|n|})e^{in\theta}.$

\subsection{Lacunary}

\noindent
A sequence $\{n_k\}_{k\ge 1}$ of positive integers is called \textbf{lacunary} if
$
\inf_{k\geq1}\frac{n_{k+1}}{n_k}>1.
$
The corresponding series $\sum_{k=1}^{\infty}a_{n_k}z^{n_k}$ is called a lacunary series.

\begin{proposition}
Let $\{X_n\}_{n=0}^{\infty}$ be a centered Gaussian process with $\sup_{n\geq0}\mathbb{E}X_n^2<\infty.$ Then for any $\{a_n\}_{n=0}^{\infty}\in\ell^2$ which is  lacunary for $n$, we have
$
\sum_{n=0}^{\infty}a_nX_nz^n\in  H^p \ \ \text{a.s.}$
for any $p \ge 1$.
\end{proposition}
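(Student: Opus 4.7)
The plan is to reduce this to the classical theorem of Zygmund on lacunary trigonometric series, bypassing Theorem~\ref{T:main} entirely (note that here only $\sup_n \mathbb{E}X_n^2 < \infty$ is assumed, which is far weaker than $K$ being bounded on $\ell^2$). By hypothesis, the support $\{n : a_n \neq 0\}$ is contained in a lacunary sequence $\{n_k\}_{k\ge 1}$, so the random series takes the form $\sum_k a_{n_k} X_{n_k} z^{n_k}$ with $\{a_{n_k}\}_k \in \ell^2$.

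First I would establish the $\ell^2$-integrability of the random coefficients. Setting $M \doteq \sup_{n\ge 0} \mathbb{E}X_n^2 < \infty$, Tonelli gives
\[
\mathbb{E}\sum_{k=1}^{\infty} |a_{n_k} X_{n_k}|^2 \;=\; \sum_{k=1}^{\infty} |a_{n_k}|^2 \,\mathbb{E}X_{n_k}^2 \;\le\; M \sum_{k=1}^{\infty} |a_{n_k}|^2 \;<\;\infty,
\]
so $\{a_{n_k} X_{n_k}(\omega)\}_k \in \ell^2$ for a.e.\ $\omega \in \Omega$. Note that this step does not use any information about the joint distribution of $\{X_n\}$, only the uniform variance bound.

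Second, I would invoke Zygmund's theorem on lacunary series: if $\{n_k\}$ is lacunary and $\{c_k\}\in\ell^2$, then the lacunary power series $\sum_k c_k z^{n_k}$ lies in $H^p(\dd)$ for every $p\in(0,\infty)$, with $L^p$-norm on the circle comparable to $\|\{c_k\}\|_{\ell^2}$ (see, e.g., Zygmund's monograph). Applying this pointwise in $\omega$ to $c_k = a_{n_k} X_{n_k}(\omega)$, we conclude that outside a null set
\[
\sum_{n=0}^{\infty} a_n X_n(\omega) z^n \;=\; \sum_{k=1}^{\infty} a_{n_k} X_{n_k}(\omega) z^{n_k} \;\in\; H^p(\dd)
\]
for every finite $p$, in particular for every $p \ge 1$.

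There is no real obstacle to this argument; both inputs (the second-moment bound and Zygmund's lacunary theorem) are one-line applications. The only minor subtlety worth recording is that Zygmund's theorem fails at $p=\infty$ (where the membership is governed by $\sum_k |c_k| < \infty$ rather than by $\ell^2$), which is consistent with the statement restricting attention to $p\ge 1$ in the sense of finite $p$. The point of the proposition is precisely that lacunarity of the frequencies lets one dispense with the covariance hypothesis of Theorem~\ref{T:main}: no bounded operator on $\ell^2$, no Bessel sequence, and no Kahane--Khintchine estimates are needed.
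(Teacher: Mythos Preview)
Your proof is correct and follows essentially the same route as the paper's. The paper also reduces to the classical lacunary characterization (citing Theorem~6.2.2 in \cite{2016Taylor} rather than Zygmund directly) to the effect that a lacunary power series lies in $H^p$ precisely when its coefficient sequence is in $\ell^2$, and then verifies $\sum_n |a_n X_n|^2 < \infty$ a.s.\ by the identical expectation computation $\mathbb{E}\sum_n |a_n X_n|^2 \le \sup_n \mathbb{E}X_n^2 \cdot \sum_n |a_n|^2 < \infty$.
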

\noindent This has the following two consequences:
\begin{itemize}
  \item If a centered Gaussian process $X=\{X_n\}_{n=0}^{\infty}$ is a lacunary sequence, then
      $X$ has the $L$-property if and only if $\sup_{n\geq0}\mathbb{E}X_n^2<\infty.$
  \item Let $\{X_n\}_{n=0}^{\infty}$ be independent centered Gaussian variables. Let
\begin{equation}
E=\Big\{\{a_n\}_{n=0}^{\infty}\in\ell^2: \ \{a_n\}_{n=0}^{\infty} \ \text{is a lacunary sequence}\Big\}.
\end{equation}
Then for any $\{a_n\}_{n=0}^{\infty}\in E,$ $\sum_{n=0}^{\infty}a_nX_nz^n\in H^p$ a.s. for $p\ge 1$ if and only if $\{ \mathbb{E}X_n^2\} \in l^\infty.$
\end{itemize}

\smallskip

\noindent For proof, observe that if $\{a_n\}_{n=0}^{\infty}\in\ell^2$ is  lacunary, then by Theorem 6.2.2 in \cite{2016Taylor},
$
\sum_{n=0}^{\infty}a_nX_nz^n\in H^p \ \ \text{a.s.}
$ if and only if
$ \sum_{n=0}^{\infty}|a_nX_n|^2<\infty \ \ \text{a.s.}
$
This is due to
\begin{eqnarray*}
\mathbb{E}\Big(\sum_{n=0}^{\infty}|a_nX_n|^2\Big)=\sum_{n=0}^{\infty}|a_n|^2\mathbb{E}X_n^2
\leq\sup_{n\geq0}\mathbb{E}X_n^2\sum_{n=0}^{\infty}|a_n|^2<\infty.
\end{eqnarray*}

\subsection{The equivalence of Gaussian processes}

\noindent
%Let $(\Omega, \mathcal{F}, P)$ be a probability space. 
Let $\mu_i$  be the induced Gaussian measures on $\big(\mathbb{R}^{\mathbb{N}\cup \{0\}}, \mathcal{E}^{\mathbb{N}\cup \{0\}}\big)$ via two Gaussian processes $X^{(i)}=\{X^{(i)}_n\}_{n=0}^{\infty}$, $i=1, 2$.

%
%\begin{theorem}[\cite{Hida}, Theorem 6.1]\label{Gaussianequi-sing}
%Two Gaussian measures on a measurable space $\big(E, \mathcal{E}\big)$ are either equivalent or singular.
%\end{theorem}

\begin{proposition}
There exist two centered Gaussian processes $X^{(1)}, X^{(2)}$ such that both of them have the L-property but their corresponding  Gaussian measures $\mu_1, \mu_2$ are mutually singular.
\end{proposition}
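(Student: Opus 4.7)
My plan is to exhibit the simplest possible witnesses: two independent i.i.d.\ Gaussian sequences with different variances. Concretely, set $X^{(1)}_n$ to be i.i.d.\ $N(0,1)$ and $X^{(2)}_n$ to be i.i.d.\ $N(0,\sigma^2)$ for some fixed $\sigma\neq 1$, say $\sigma=2$. Their covariance matrices are $\mathrm{Id}$ and $\sigma^2\,\mathrm{Id}$, each manifestly bounded on $\ell^2$, so Theorem \ref{T:main} immediately grants both the $L$-property (in fact the first one is Littlewood's original theorem, and the second follows by rescaling). All that remains is to check that the induced product measures $\mu_1, \mu_2$ on $\mathbb{R}^{\mathbb{N}\cup\{0\}}$ are mutually singular.

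For the singularity, I would argue directly via the strong law of large numbers on the coordinate space. Let $\pi_n$ denote the $n$-th coordinate projection on $\mathbb{R}^{\mathbb{N}\cup\{0\}}$. Under $\mu_i$ the family $\{\pi_n\}_{n\geq 0}$ is i.i.d.\ with $\mathbb{E}_{\mu_i}[\pi_n^2]=\sigma_i^2$, where $\sigma_1=1$ and $\sigma_2=2$. Setting
\[
A=\Big\{x\in\mathbb{R}^{\mathbb{N}\cup\{0\}}:\ \lim_{N\to\infty}\tfrac{1}{N}\sum_{n=0}^{N-1}\pi_n(x)^2=1\Big\},
\]
the SLLN gives $\mu_1(A)=1$, while under $\mu_2$ the same Ces\`aro average tends almost surely to $4$, so $\mu_2(A)=0$. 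Hence $\mu_1\perp\mu_2$.

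There is essentially no obstacle: the entire point is that the $L$-property (boundedness of $K$ on $\ell^2$) is a much coarser invariant than the equivalence class of the Gaussian measure, so any construction that keeps the second-moment structure inside $B(\ell^2)$ while forcing the per-coordinate variances apart will succeed. Kakutani's dichotomy for product measures yields the same singularity in one line, since the per-coordinate Hellinger affinity between $N(0,1)$ and $N(0,\sigma^2)$ equals $\sqrt{2\sigma/(1+\sigma^2)}<1$ whenever $\sigma\neq 1$, so the infinite product vanishes; but the SLLN description above is more transparent and avoids any appeal to the Feldman--H\'ajek theorem.
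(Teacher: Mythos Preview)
Your proof is correct and actually more elementary than the paper's. The paper instead constructs a \emph{dependent} process: it defines the lower-triangular matrix $A=(a_{i,j})$ with $a_{i,j}=2^{j-i}$ for $i\ge j$, sets $X_n=\sum_{k=0}^n a_{n,k}\xi_k$, verifies that $K=AA^T$ is bounded on $\ell^2$ via Schur's test (hence the $L$-property by Theorem~\ref{T:main}), and then invokes a theorem from Hida's monograph to conclude that the resulting Gaussian measure is singular with respect to that of the standard sequence $\{\xi_n\}$.

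Your route has the advantage of being self-contained: the SLLN argument for singularity requires no external reference, whereas the paper appeals to a Cameron--Martin/Feldman--H\'ajek--type criterion. On the other hand, the paper's example is arguably more in the spirit of the section, since it exhibits a genuinely correlated process (non-diagonal $K$) that still enjoys the $L$-property---illustrating that the main theorem reaches beyond the independent case. For the bare existence statement, though, your pair $(\mathrm{Id},\sigma^2\mathrm{Id})$ is the minimal witness and nothing more is needed.
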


\noindent Indeed, let
%$$\label{1}
%A\triangleq\big(a_{i,j}\big)_{i,j\geq0}=\left( \begin{matrix}
%	1& 0& 0& 0& 	\cdots&	\\
%	\frac{1}{2}& 1&  0&  0&    \cdots&\\
%   \frac{1}{4}& \frac{1}{2}& 1&   0&   \cdots&	\\
%  \frac{1}{8}& \frac{1}{4}&  \frac{1}{2}& 1&    \cdots&	\\
%      \vdots& \vdots& \vdots& \vdots&  \ddots\\
%\end{matrix} \right),
%$$
$A=\big(a_{i,j}\big)_{i,j\geq0},$ where $
%\[
a_{i,j}=\left\{
\begin{array}{ll}
2^{j-i} \ & \ i\geq j\\
0 & \ i<j
\end{array}
\right.
$
%\]
and let 
$
X_n=\sum_{k=0}^{n}a_{n,k}\xi_k, \ n\geq0,
$
where $\{\xi_n\}_{n=0}^\infty$ is a standard Gaussian sequence.
Then, $\{X_n\}_{n=0}^{\infty}$ is a centered Gaussian process with the $L$-property. This  is due to the Schur test, the fact that $
K=\big(\mathbb{E}(X_mX_n)\big)_{m,n\geq0}= AA^T$, and Theorem \ref{T:main}. The induced Gaussian measure is singular to that of a standard Gaussian sequence by an application of Theorem 6.2 in \cite{Hida}.

\subsection{Necessary conditions}

\noindent
In this subsection we present two necessary conditions for a centered Gaussian process to have the $L$-property. 

\begin{proposition}\label{necessary2}
Let $\{X_n\}_{n=0}^{\infty}$ be a centered Gaussian process with the L-property.
  Then for any $m\geq0,$
 $
  \big\{\mathbb{E}(X_nX_m)\big\}_{n=0}^{\infty}\in\bigcap_{1\leq p<\infty}(H^2, H^p).
  $
Furthermore, if $\Big\{\big|\mathbb{E}(X_nX_m)\big|\Big\}_{n=0}^{\infty}$ is  decreasing  for some $m,$ then
 for every $\epsilon>0,$
%\begin{equation}
$$\mathbb{E}(X_nX_m)=o\big(n^{-\frac{1}{2}+\epsilon}\big).$$
%\end{equation}
\end{proposition}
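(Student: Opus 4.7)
The plan is to handle the two claims separately. The first is operator-theoretic: I will realize $\{\mathbb{E}(X_nX_m)\}_{n\geq 0}$ as the Taylor coefficients of a suitable $H^p$-valued Bochner integral built from $\mathcal{R}f$. The second combines the multiplier conclusion of part (a) with the classical Hardy--Littlewood characterization of $H^p$-membership for monotone Taylor coefficients to extract the decay rate.

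For part (a), fix $m\geq 0$. The $L$-property together with part (1) of Theorem~\ref{mainresult} gives that $\mathcal{R}: H^2\to L^2(\Omega, H^p)$ is bounded for every $p\in[1,\infty)$. Given $f(z)=\sum_{n=0}^\infty a_nz^n\in H^2$, I set
$$T_m f\doteq\mathbb{E}\bigl(X_m\,\mathcal{R}f\bigr),$$
which is a well-defined element of $H^p$ since Cauchy--Schwarz yields
$$\mathbb{E}\|X_m\,\mathcal{R}f\|_{H^p}\leq\|X_m\|_{L^2(\Omega)}\,\|\mathcal{R}f\|_{L^2(\Omega,H^p)}<\infty.$$
Since the $n$-th Taylor-coefficient map is a bounded linear functional on $H^p$, it commutes with the Bochner integral; applied to $T_m f$ this produces $a_n\,\mathbb{E}(X_nX_m)$. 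Hence $\sum_n a_n\mathbb{E}(X_nX_m)\,z^n=T_m f\in H^p$, which is precisely the statement that $\{\mathbb{E}(X_nX_m)\}_n\in(H^2,H^p)$ for every $p\in[1,\infty)$.

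For part (b), write $b_n=\mathbb{E}(X_nX_m)$. Replacing $a_n$ by $\operatorname{sign}(b_n)a_n$ in the multiplier condition obtained in (a) shows that $\{|b_n|\}$ is also a multiplier from $H^2$ to $H^p$, so I may assume $b_n\geq 0$ and decreasing. As the test input I take
$$f(z)=\sum_{n\geq 2}\frac{z^n}{\sqrt{n}\,\log n}\in H^2,$$
so that $c_n\doteq a_nb_n=b_n/(\sqrt{n}\,\log n)$ is non-negative and decreasing. By the classical Hardy--Littlewood theorem for monotone Taylor coefficients, the fact $\sum_n c_nz^n\in H^p$ for every $1<p<\infty$ is equivalent to
$$\sum_{n\geq 2}\frac{b_n^{\,p}\,n^{p/2-2}}{(\log n)^p}<\infty\quad\text{for every }1<p<\infty.$$
Suppose, for contradiction, that $b_n\neq o(n^{-1/2+\epsilon})$ for some $\epsilon>0$; then there exist $\delta>0$ and $n_k\to\infty$ with $b_{n_k}\geq\delta\,n_k^{-1/2+\epsilon}$, and by monotonicity $b_n\geq\delta\,n_k^{-1/2+\epsilon}$ for every $n\leq n_k$. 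For $p>\max(4,1/\epsilon)$, restricting the above sum to $n\in[\lfloor n_k/2\rfloor,n_k]$ yields a lower bound of order $n_k^{\,p\epsilon-1}/(\log n_k)^p$, which tends to $\infty$ as $k\to\infty$. This contradicts the fact that such a dyadic block of a convergent series must tend to $0$. Hence $b_n=o(n^{-1/2+\epsilon})$ for every $\epsilon>0$.

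The only delicate step is the coefficient identification in part (a), which relies on the commutation of the $n$-th Taylor-coefficient functional (bounded linear on $H^p$) with the Bochner integral defining $T_m f$; this is a standard property of Bochner integrals. The remaining ingredients—the sign-flip reduction to the non-negative case in (b) and the Hardy--Littlewood theorem for monotone coefficients—are classical.
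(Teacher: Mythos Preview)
Your proof is correct and follows essentially the same route as the paper's: for part (a) both arguments compute $\mathbb{E}(X_m\cdot\mathcal{R}f)$ and identify its Taylor coefficients as $a_n\mathbb{E}(X_nX_m)$, the only difference being that you invoke Theorem~\ref{mainresult}(1) and Bochner-integral machinery while the paper uses Fernique's theorem and a radial-limit argument directly; for part (b) both feed a barely-$H^2$ test sequence into the multiplier and apply the Hardy--Littlewood monotone-coefficient criterion, the paper choosing $a_n=\operatorname{sgn}(\lambda_n)\,n^{-1/2-\epsilon}$ where you use $a_n=(\sqrt{n}\,\log n)^{-1}$.
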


\begin{proof}
 Let $p\in[1, \infty).$ For  $f(z)\in H^2,$
 by Fernique's theroem,
$
\mathbb{E}\exp\big(\alpha||\mathcal{R}f||_{H^p}^2\big)<\infty
$ for some
$\alpha>0.$
Write
$
X=\sum_{n=0}^{\infty}a_nX_nz^n
$
and let $\tilde{f}_\omega(\theta)$ denote the boundary value function of $\sum_{n=0}^{\infty}a_nX_nz^n.$
Then
$
\mathbb{E}\big(||X_m\cdot X||_{H^p}\big) \leq\big(\mathbb{E}|X_m|^2\big)^{\frac{1}{2}}\big(\mathbb{E}||X||_{H^p}^2\big)^{\frac{1}{2}},
$ which is finite by Fernique's theorem.
Let $0<r<1$ and let $
g_r=\sum_{n=0}^{\infty}a_nX_mX_nr^nz^n$.
By Fatou's lemma,
\begin{eqnarray*}
\limsup_{r\to1}\big|\big|g_r-X_m\cdot X\big|\big|_{L^1(\Omega, H^p)}
&=&\limsup_{r\to1}\mathbb{E}\Big(|X_m|\Big(\int_{0}^{2\pi}\Big|
\sum_{n=0}^{\infty}a_nX_nr^ne^{in\theta}- \tilde{f}_\omega(\theta)\Big|^p\frac{d\theta}{2\pi}\Big)^{1/p}\Big)\\
&\leq&\mathbb{E}\Big(\limsup_{r\to1}|X_m|\Big(\int_{0}^{2\pi}\Big|
\sum_{n=0}^{\infty}a_nX_nr^ne^{in\theta}- \tilde{f}_\omega(\theta)\Big|^p\frac{d\theta}{2\pi}\Big)^{1/p}\Big),
\end{eqnarray*}
which is zero. So
$
\mathbb{E}(X_m\cdot X)=\sum_{n=0}^{\infty}\mathbb{E}(X_mX_n)a_nz^n\in H^p.$
Hence, $
  \big\{\mathbb{E}(X_nX_m)\big\}_{n=0}^{\infty}\in(H^2, H^p).$

\noindent
Next we assume that $\big\{|\mathbb{E}(X_nX_m)|\big\}_{n=0}^{\infty}$ is a decreasing sequence for some $m.$ For convenience, we write $
    \la_n\triangleq\mathbb{E}(X_nX_m), \ n\geq0.$
Fix $\epsilon>0$. Note that $
    f(z)=\sum_{n=0}^{\infty}\frac{\text{sgn}(\la_n)}{n^{\frac{1}{2}+\epsilon}}z^n \in H^2$ and
$
 \sum_{n=0}^{\infty}\frac{\la_n\cdot\text{sgn}(\la_n)}{n^{\frac{1}{2}+\epsilon}}z^n
=\sum_{n=0}^{\infty}\frac{|\la_n|}{n^{\frac{1}{2}+\epsilon}}z^n\in H^p,
$
because of $\{\la_n\}_{n=0}^{\infty}\in(H^2, H^p).$
Since $\Big\{\frac{|\la_n|}{n^{\frac{1}{2}+\epsilon}}\Big\}_{n=1}^{\infty}$ is also  decreasing,
 by  Theorem 6.2.14 (\cite{2016Taylor},  p. 121),
we have
$
\sum_{n=1}^{\infty}(n+1)^{p-2}\Big(\frac{|\la_n|}{n^{\frac{1}{2}+\epsilon}}\Big)^p<\infty.
$
%The proof is easily concluded since $p$ is arbitrary.
\end{proof}

\noindent
The following necessary condition is curious to us. Although it seems strong, we do not know how to make further use of it.
\begin{proposition}
Let $\{X_n\}_{n=0}^{\infty}$ be a centered Gaussian process defined on a probability space $(\Omega, \mathcal{F}, P)$. Assume that its covariance matrix $K$ is bounded on $\ell^2.$ Then, for each $X\in L^2(\Omega)$  and $\{a_n\}_{n=0}^{\infty}\in\ell^2,$ we have
$
\mathbb{E}\big(X\cdot\sum_{n=0}^{\infty}a_nX_nz^n\big)\in\mathcal{W},
$
where $\mathcal{W}$ is the Wiener algebra.
\end{proposition}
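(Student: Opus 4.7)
The plan is to read off the Taylor coefficients of $F(z)\doteq\mathbb{E}\big(X\cdot\sum_{n=0}^{\infty}a_nX_nz^n\big)$ as $c_n=a_n\,\mathbb{E}(XX_n)$ and verify directly that $\{c_n\}_{n=0}^{\infty}\in\ell^1$, which places $F$ in the Wiener algebra $\mathcal{W}$. The whole argument rests on one observation: since $K$ is bounded on $\ell^2$, Proposition~\ref{Bessel} tells us $\{X_n\}_{n=0}^{\infty}$ is a Bessel sequence in $L^2(\Omega)$ with bound $\|K\|$. Hence
$$\sum_{n=0}^{\infty}|\mathbb{E}(XX_n)|^2=\sum_{n=0}^{\infty}\big|\langle X,X_n\rangle_{L^2(\Omega)}\big|^2\leq \|K\|\,\|X\|_{L^2(\Omega)}^2,$$
so $\{\mathbb{E}(XX_n)\}_{n=0}^{\infty}\in\ell^2$. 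A single application of Cauchy-Schwarz then yields
$$\sum_{n=0}^{\infty}|c_n|=\sum_{n=0}^{\infty}|a_n|\,|\mathbb{E}(XX_n)|\leq \|a\|_{\ell^2}\sqrt{\|K\|}\,\|X\|_{L^2(\Omega)}<\infty,$$
which gives $\{c_n\}\in\ell^1$.

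The only remaining point is the identification $F(z)=\sum_{n=0}^{\infty}c_nz^n$, i.e., the interchange of expectation and summation. For $z\in\overline{\mathbb{D}}$, set $S_N(z)=\sum_{n=0}^{N}a_nX_nz^n$. Splitting the real and imaginary parts of the complex weights $a_nz^n$ and applying Proposition~\ref{Bessel}(ii) to each part, one obtains the tail estimate
$$\mathbb{E}|S_N(z)-S_M(z)|^2\leq \|K\|\sum_{n=M+1}^{N}|a_n|^2|z|^{2n}\leq \|K\|\sum_{n=M+1}^{N}|a_n|^2,$$
so $\{S_N(z)\}_N$ is Cauchy in $L^2(\Omega)$ and converges to $\sum_{n=0}^{\infty}a_nX_nz^n$. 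By Cauchy-Schwarz, $X\cdot S_N(z)\to X\cdot\sum_{n=0}^{\infty}a_nX_nz^n$ in $L^1(\Omega)$, and since linearity gives $\mathbb{E}(X\cdot S_N(z))=\sum_{n=0}^{N}c_nz^n$ for every $N$, passing to the limit produces $F(z)=\sum_{n=0}^{\infty}c_nz^n$, which belongs to $\mathcal{W}$ by the first step.

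There is no substantive obstacle here: the proposition is essentially one invocation of the Bessel property followed by Cauchy-Schwarz. The only step requiring mild care is that $a_nz^n$ may have complex phase when invoking Proposition~\ref{Bessel}(ii), which is routinely handled by splitting into real and imaginary parts so that the Bessel inequality, stated for real scalars, still applies.
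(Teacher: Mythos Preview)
Your argument is correct and follows essentially the same approach as the paper: the core step in both is that the boundedness of $K$ makes $\{X_n\}$ a Bessel sequence, whence $\{\mathbb{E}(XX_n)\}\in\ell^2$, and then Cauchy--Schwarz against $\{a_n\}\in\ell^2$ lands the coefficients in $\ell^1$. The only cosmetic difference is in justifying the interchange of $\mathbb{E}$ and $\sum$: the paper computes $\mathbb{E}\|X\cdot Y\|_{H^2}<\infty$ and invokes the vector-valued dominated convergence theorem in $H^2$, whereas you work pointwise in $z$ and pass to the limit in $L^1(\Omega)$ via $L^2$-convergence of the partial sums; both are routine and equivalent here.
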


\begin{proof}
Write
$
Y=\sum_{n=0}^{\infty}a_nX_nz^n.
$
Since $K$ is bounded on $\ell^2$, we have
$\sup_{n\geq 0}\mathbb{E}X_n^2<\infty.$
 By direct calculation,
$
\mathbb{E}\big(||X\cdot Y||_{H^2}\big)\leq\big(\mathbb{E}|X|^2\big)^{1/2}\Big(\mathbb{E}\big(\sum_{n=0}^{\infty}|a_nX_n|^2\big)\Big)^{1/2}<\infty.
$
By the vector-valued dominated convergence theorem,
$
\mathbb{E}(X\cdot Y)
=\sum_{n=0}^{\infty}\mathbb{E}(XX_n)a_nz^n.
$
Since the covariance matrix $K$ is bounded on $\ell^2,$ by Proposition \ref{Bessel},   $\{X_n\}_{n=0}^{\infty}$ is a Bessel sequence. Consequently,  $\big\{E(XX_n)\big\}_{n=0}^{\infty}\in\ell^2.$ Hence,
$
\sum_{n=0}^{\infty}\big|\mathbb{E}(XX_n)a_n\big|<\infty
$
and
$
\mathbb{E}\big(X\cdot\sum_{n=0}^{\infty}a_nX_nz^n\big)\in\mathcal{W}.
$
\end{proof}

\subsection{Sidon set}

\noindent
The following is an example such that
$\sum_{n=0}^{\infty}a_nX_nz^n\in H^p$ a.s. for any $p>1$, but $\sum_{n=0}^{\infty}a_nX_nz^n\notin H^\infty$ a.s., based on Exercise 4 in (\cite{RandomSeries}, p. 232). Let $\{\xi_n\}_{n=0}^{\infty}$ be a standard Gaussian sequence. Let $\{a_n\}_{n=0}^{\infty}\in\ell^2$ be such that $a_n=0$ except when $n$ belongs to {a Sidon set}. Here, a set of integers $\Lambda$ is called a \textbf{Sidon set} if $\Lambda$ satisfies the following property: if $g\in L^\infty[0,2\pi]$ and the Fourier series of $g$ is $\sum_{\lambda\in\Lambda}d(\lambda)e^{i\lambda\theta},$ then $\sum_{\lambda\in\Lambda}\big|d(\lambda)\big|<\infty$ (\cite{RandomSeries}, p. 71). Under the above conditions, we have
$
 \sum_{n=0}^{\infty}a_n\xi_nz^n\in H^p \ \ \text{a.s.} $ for any $p >1$.
Furthermore,
 if $\sum_{n=0}^{\infty}|a_n|=\infty,$ then
 $
 \sum_{n=0}^{\infty}a_n\xi_nz^n\notin H^\infty\ \ \text{a.s.}
 $
 This follows from a celebrated condition of Marcus-Pisier
(\cite{RandomSeries}, pp. 231-232). (For details, see Exercise 4 in (\cite{RandomSeries}, p. 232)). Another example with the above property has been introduced by Paley and Zygmund (\cite{Paley1}, p. 350).

\subsection{The sharpness of Theorem \ref{T:main}}\label{SS:sharp}
The purpose of this subsection is to discuss the sharpness of  Theorem  \ref{T:main} by a sequence of $\{X_n\}_{n\ge 0}$ with maximal dependence. In particular, we seek to  answer the following three questions:
\begin{itemize}
\item[(1)] Is $\sup_{n\geq0}\mathbb{\mathbb{E}}X_n^2 <\infty$ sufficient for the $L$-property?
\item[(2)] Is the boundedness of covariance matrix $K$ necessary for the $L$-property?
\item[(3)] Is the covariance structure of $K$ corresponding to the $L_p$-property different as $p$ varies?
\end{itemize}

\noindent
Let $\xi$ be a standard Gaussian variable and let $\{c_n\}_{n=0}^{\infty}$ be a sequence of complex scalars. In this subsection we consider the case $X_n=c_n \xi$.  

\bigskip

\noindent 
The answer to (1) is no by taking $c_n=1$. That is, $E(X_n X_m)=1$ for all $n, m \geq0.$ In this case we choose  $f(z)=\sum_{n=0}^{\infty}a_nz^n\in H^2\setminus\big(\cup_{p>2}H^p\big).$ Then
$(\mathcal{R}f)(z)=\xi\cdot\sum_{n=0}^{\infty}a_nz^n\notin\cup_{p>2}H^p$ a.s.

\bigskip

\noindent
The answer to (2) is no by taking $c_n=\frac{1}{\sqrt{n+1}}$.
The covariance matrix $$K=\Big(\mathbb{E}\big(X_nX_m\big)\Big)_{n, m\geq0}=\bigg(\frac{1}{\sqrt{(n+1)(m+1)}}\bigg)_{n, m\geq0}$$ is clearly unbounded.  We claim that it has the $L_p$-property for each $p\geq2.$ In fact, $\{c_n\}_{n=0}^{\infty}$ is a coefficient multiplier from $H^2$ to $H^p$ for any $p \in (2, \infty)$ by a result of Duren (\cite{Duren69}, Theorem 1).  

\smallskip

\noindent
The answer to (3) is yes. If $2\leq p_1<p_2,$ then by \cite{Duren69} again, there exists some  $c_n=O\big( n^{-(\frac{1}{2}-\frac{1}{p_1})}\big) $ such that    $\{c_n\}_{n=0}^{\infty}$  is a coefficient multiplier from $H^2$ to $H^{p_1},$ but not to $H^{p_2}$. This implies that  there exists  $g(z)=\sum_{n=0}^{\infty}b_nz^n\in H^2$ such that $\sum_{n=0}^{\infty}b_nc_nz^n\notin H^{p_2}.$ So $\{X_n\}_{n=0}^{\infty}$ has the $L_{p_1}$-property but no  $L_{p_2}$-property.

\

\noindent
Now it comes  a natural question: what might be the ``correct" characterization of the $L$-property in terms of a Gaussian process?
In principle everything about a Gaussian process is encoded in its mean and covariance functions, but this can be hard to carry out in certain situations. It is not clear to us whether a clean characterization of the $L$-property in terms of the covariance exists at all, although we are somehow pessimistic.
In Subsection 5.4 we shall see that bandlimitedness of the covariance operator is sufficient for the $L$-property.
Together with the above $c_n=1$ example, it suggests that a precise characterization in terms of $K$, if ever to be found,
should be some type of decay condition for entries far off the diagonal.

\

\noindent
On the other hand, the above example of $c_n=\frac{1}{\sqrt{n+1}}$ shows that the boundedness of $K$ as a sufficient
condition is borderline optimal at least in certain sense. Indeed, if $c_n=\frac{1}{(n+1)^{1/2+\epsilon}}$ for  $\epsilon >0$, then $K$ is bounded by Bessel sequences. On the other hand, for any $\epsilon >0$, there exists a sign sequence $\ep_n \in \{1, -1\}$ such that  $\{c_n=\epsilon_n \frac{1}{(n+1)^{1/2-\epsilon}}\}_{n=0}^\infty$  is not  a coefficient multiplier from $H^2$ to $H^q$ for $q$ large enough; see the arguments in the proof of Theorem 3 in \cite{Duren69}. In particular, $\{X_n\}_{n=0}^\infty$ has no $L$-property.

\

\noindent
In summary,  the full characterization of the $L$-property in terms of $K$, if ever to be found,
should be a class of $K$ only ``slightly larger" than the class of bounded operators. We have no clue  how to guess such a class in a precise way, although we have the vague idea that this might be comparable to the fact that $\cap_{p\ge 1} L^p(\mathbb{T})$ is slightly larger than $C(\mathbb{T})$.

\bignobf{Acknowledgement}

\noindent
Cheng is supported by NSFC (11871482). Fang is supported by MOST of Taiwan (106-2115-M-008-001-MY2) and NSFC (11571248) during his visits to   Soochow University in China. Guo and Liu are  supported by NSFC (11871157). Part of this work was done during Liu's visit to NCU in Taiwan in the Fall semester of 2018.

%\bibliography{chaoliu}
%\bibliographystyle{plain}

\bigskip

\noindent{\small SCHOOL OF MATHEMATICS SCIENCE, DALIAN UNIVERSITY OF TECHNOLOGY, DALIAN 116024, P. R. CHINA
\noindent Email address: gzhcheng@dlut.edu.cn}

\bigno

\noindent{\small DEPARTMENT OF MATHEMATICS, NATIONAL CENTRAL UNIVERSITY, CHUNGLI, TAIWAN (R.O.C)
\noindent Email address: xfang@math.ncu.edu.tw}

\bigno

\noindent{\small SCHOOL OF MATHEMATICAL SCIENCES, FUDAN UNIVERSITY, SHANGHAI, 200433, P. R. CHINA
\noindent Email address: kyguo@fudan.edu.cn}

\bigno

\noindent{\small SCHOOL OF MATHEMATICS SCIENCE, DALIAN UNIVERSITY OF TECHNOLOGY, DALIAN 116024, P. R. CHINA
\noindent Email address: 2020024050@dlut.edu.cn}

%%%%%%%%%%%%%%%%%%%%%%%%%%%%%%%%%%
%%%%%%%%%%%%%%%%%%%%%%%%%%%%%%%%%%
%%%%%%%%%%%%%%%%
%              END
%%%%%%%%%%%%%%%%%%%%%%%%%%%%%%%%%%
%%%%%%%%%%%%%%%%%%%%%%%%%%%%%%%%%%
%%%%%%%%%%%%%%%%
\end{document}